\theoremstyle{thmstyleone}%
\newtheorem{theorem}{Theorem}
\newtheorem{proposition}[theorem]{Proposition}%
\newtheorem*{theorem1}{Main Theorem}
\newtheorem*{question1}{Question}
\newtheorem*{definition1}{Definition}
\newtheorem*{corollary1}{Corollary}
\newtheorem{corollary}{Corollary}
\newtheorem{lemma}{Lemma}
\theoremstyle{thmstyletwo}%
\newtheorem{remark}{Remark}%
\theoremstyle{thmstylethree}%
\begin{document}

\title[On the complement of nef divisors on projective manifolds]{On the complement of nef divisors on projective manifolds}


\author[1]{\fnm{Sergey} \sur{Feklistov}}\email{sergeyfe2017@yandex.ru}

\affil[1]{\orgname{HSE University}, \country{Russian Federation}}


\abstract{
Let $X'$ be a complex projective manifold, $\dim X'>1$, $Z$ a connected analytic subset of codimension one which is the support of a nef effective Cartier divisor $D$ on $X'$, $X:=X'\setminus Z$. Let $\kappa(D)$ be the Iitaka dimension of $D$. We prove that $X$ is not Hartogs if and only if $D$ is abundant and $\kappa(D)=1$. In particular, $X$ is not Hartogs if and only if $X$ is a proper fibration over an affine curve. }

\keywords{projective manifold, holomorphic extension, abundant divisor, nef divisor, Ueda type, fibration, cohomology with compact support, Lefschetz type property}


\pacs[MSC Classification]{Primary 32D20, 32L10, 32C35, 14C20, 14D06, Secondary 14J70, 32J15}

\maketitle

\section{Introduction}

In this paper, we discuss complex analytic properties of complements of effective divisors on complex manifolds. Namely, we study the Hartogs extension property (see definition below). This topic is naturally related to several others: the structure of neighborhoods of $Z$, the existence of holomorphic or regular functions near $Z$ or on $X'\setminus Z$, the existence of Hermitian metrics with certain positivity and smoothness properties, and foliation dynamics.

For relations between properties of Hermitian metrics on line bundles and the complex analytic structure of complements, see \cite{Brunella10, Horing, Koike,Koike25, Ohsawa}. For connections between Ueda theory, holonomy representation, and complex dynamics of foliations, see for example \cite{Claudon, Koike20}. On complements of nef divisors and their complex algebraic properties, see also \cite{Russo1, Russo2}.

Before proceeding, let us mention another classical question found in Hartshorne \cite[Chapter 6, Section 3, Problem 3.4]{Hartshorne}. Consider the complement $X'\setminus Z$, where $Z$ is an irreducible smooth curve on a complete smooth complex algebraic surface $X'$. Is the condition $Z^{2}\geq 0$ sufficient for $X'\setminus Z$ to be a Stein manifold?

If $Z^{2}<0$, then $Z$ can be contracted to a point. The complement $X'\setminus Z$ has only constant holomorphic functions, and $Z$ admits a fundamental system of strongly pseudoconvex neighborhoods; in particular, holomorphic functions near $Z$ are the same as on a 2-dimensional Stein variety \cite{Grauert}.

If $Z^{2}>0$, standard arguments show that $X'\setminus Z$ is a proper modification of a Stein variety; in particular, $X'\setminus Z$ has many regular and holomorphic functions, while $Z$ admits a fundamental system of strongly pseudoconcave neighborhoods, so holomorphic functions near $Z$ are only constants \cite{Suzuki}.

The case $Z^{2}=0$ is more interesting, as it relates to another phenomenon: the existence of Stein non-affine surfaces (Serre's example; see again \cite{Hartshorne}). Hartshorne also asked: let $Z\subset X'$ be an embedding of a smooth connected closed curve in a smooth projective surface, where $Z^{2}=0$ and $X'\setminus Z$ contains no complete curves. Does it follow that $X'\setminus Z$ is Stein non-affine? The general answer is negative; the first counterexample was due to Ogus \cite{Ogus}. Ueda investigated complex analytic properties of neighborhoods of curves $Z$ with $Z^{2}=0$ and classified them into four classes \cite{Ueda}. For example, if the Ueda type of $Z\hookrightarrow X'$ is finite (class $\alpha$ in Ueda's classification), then $X'\setminus Z$ is non-affine but a proper modification of a Stein variety; moreover, it has no non-constant regular functions but many holomorphic functions, and $Z$ admits a fundamental system of strongly pseudoconcave neighborhoods. If the Ueda type is infinite, then $Z$ may admit a fundamental system of pseudoflat neighborhoods (classes $\beta',\beta''$ in Ueda's classification). For more details on Ueda theory see also \cite{Neeman}.

In this paper, we focus on the Hartogs extension property for complex manifolds.

\begin{definition1}
A non-compact complex manifold $X$ admits the \textbf{Hartogs phenomenon} (we often call such a manifold \textbf{Hartogs}) if for any compact set $K \subset X$ with $X\setminus K$ connected, the restriction homomorphism $\Gamma(X, \mathcal{O}_{X}) \to \Gamma(X\setminus K,\mathcal{O}_{X})$ is an isomorphism, where $\mathcal{O}_{X}$ is the sheaf of holomorphic functions.
\end{definition1}

For example, a proper modification of a Stein manifold of dimension at least 2 is Hartogs (on the Hartogs extension phenomenon for Stein spaces see \cite{Ov}).

More explicitly, we address the following general question:
\begin{question1}
Let $X'$ be a compact complex manifold, $D$ an effective divisor on $X'$ with connected support $Z$. What conditions on $D$ are necessary and sufficient for $X'\setminus Z$ to be Hartogs?
\end{question1}

For a smooth connected complex curve $Z$ on a compact (not necessarily projective) complex surface $X'$, we have the following statements by results of Grauert, Suzuki, Ueda and Corollary \ref{crucialcor}:

\begin{itemize}
\item If either $Z^{2}<0$, or $Z^{2}=0$ and of type $\beta'$, then $X'\setminus Z$ is not Hartogs;
\item If either $Z^{2}>0$, or $Z^{2}=0$ and of types $\alpha, \beta''$ or $\gamma$, then $X'\setminus Z$ is Hartogs.
\end{itemize}

For arbitrary effective divisors on nonsingular projective surfaces, see also Section \ref{conclusion}.

Now suppose $X'$ is a compact K\"ahler manifold, $\dim X'>2$, and $D$ is an effective divisor with connected support $Z$.

If the line bundle $\mathcal{O}(D)$ has a Hermitian metric whose curvature form is semipositive on Zariski tangent spaces of $Z$ everywhere and not identically zero at some point of $Z$, then $X'\setminus Z$ is Hartogs \cite[Theorem 0.2]{Ohsawa}. This also holds for nef effective divisors $D$ with numerical dimension $\nu(D)\geq 2$ \cite{Fek}; recall that the \textbf{numerical dimension} of a nef divisor $D$ is $\nu(D):=\max\{k\mid c_{1}(\mathcal{O}(D))^{k}\neq 0 \text{ in } H^{k,k}(X',\mathbb{R})\}$.

Assume $D$ is nef and $\nu(D)=1$. If $D=Z$ is an irreducible nonsingular hypersurface of a compact K\"ahler manifold $X'$ with topologically trivial normal bundle $N_{Z,X'}:=\mathcal{O}_{Z}(Z)$ and such that $\mathcal{O}(Z)$ admits a $C^{\infty}$-smooth Hermitian semipositive metric, there are results on the Hartogs property for $X'\setminus Z$ \cite[Theorem 1.1]{Koike}. Namely, if $N_{Z,X'}\in Pic^{0}(Z)$ is torsion, then $X'\setminus Z$ is not Hartogs because there exists a holomorphic fibration over a curve with $Z$ as a fiber. But if $N_{Z,X'}$ is non-torsion, then $X'\setminus Z$ is Hartogs. Note that $C^{\infty}$-semipositivity of $\mathcal{O}(Z)$ implies that the Ueda type of $Z\hookrightarrow X'$ is infinite. For an arbitrary effective divisor $ D$ with $C^{\infty}$-semipositive $\mathcal{O}(D)$, see also the recent work \cite{Koike25} (namely, Theorem 1.4). On the other hand, there exist nef line bundles that admit no smooth Hermitian metric with semipositive curvature \cite[Example 1.7]{PetDemSchn94} or \cite{Koike15}. 

Recall that the \textbf{Iitaka dimension} of an effective divisor $D$ is an integer $\kappa(D)\in\mathbb{Z}_{\geq 0}$ such that there exist $A,B\in\mathbb{R}_{>0}$ with $Am^{\kappa(D)}\leq h^{0}(X',\mathcal{O}(mD))\leq Bm^{\kappa(D)}$ for all $m\gg 0$ (for details see \cite[Chapter II, Section 3b]{Nak}). For any nef effective divisor $D$ on a projective manifold, $\kappa(D)\leq \nu(D)$ (\cite[Proposition 6.21]{Dem1}); $D$ is called \textbf{abundant} if $\kappa(D)=\nu(D)$. Obviously, for a nef effective divisor $D$ on a projective manifold, $\nu(D)>0$, and $\kappa(D)=0$ if and only if $h^{0}(X',\mathcal{O}(mD))=1$ for all $m\in\mathbb{Z}_{\geq 0}$.

The main result of this paper is the following necessary and sufficient condition for the Hartogs phenomenon for complements of nef effective divisors on projective manifolds.

\begin{theorem1}
Let $X'$ be a complex projective manifold, $\dim X'>1$, $D$ an effective nef divisor with connected support $Z$, $X:=X'\setminus Z$. Then $X$ is not Hartogs if and only if $D$ is abundant with Iitaka dimension one.
\end{theorem1}

The sufficient part is standard (Section \ref{case11}) and based on arguments from \cite[Proposition 2.1]{Kaw1} or directly on \cite[Corollary 1]{Russo1} (see also Lemma \ref{Russo}), and good properties of the Hartogs property under proper holomorphic maps (Section \ref{prophart}). The necessary part follows the same approach as \cite{Fek}, where we proved the Hartogs property for $\nu(D)>1$. Now we only need to consider the case $\kappa(D)=0$ and $\nu(D)=1$ (Section \ref{case01}). Specifically, we investigate the complete linear system $|-D|_{U}|$ associated with $(-D)|_{U}$ over a small neighborhood $U$ of $Z$ (Sections \ref{homologal} and \ref{linsys}).

Note that if $D=Z$ is a nonsingular hypersurface of $X'$ and $\mathcal{O}_{Z}(Z)$ is non-torsion with $\kappa(Z)=0,\nu(Z)=1$, then the Ueda type of $Z$ may be finite (see Remark \ref{uedarem}).

By Lemma \ref{Russo} we obtain the following fibration existence criterion (where \textbf{fibration} means a surjective holomorphic map with connected fibers of dimension at least 1). For other fibration existence criteria see also \cite{Ding, Perera, Totaro, Voisin}.

\begin{corollary1}
Let $X'$ be a complex projective manifold, $\dim X'>1$, $D$ an effective nef divisor with connected support $Z$, $X:=X'\setminus Z$. Then $X$ is not Hartogs if and only if $mD$ is a fiber of a fibration $X'\to C'$ over a projective curve $C'$ for some $m\in\mathbb{Z}_{>0}$.
\end{corollary1}

In particular, we also obtain:

\begin{corollary1}\label{cor11}
Let $X'$ be a complex projective manifold, $\dim X'>1$, $Z$ a connected analytic subset of codimension one supporting an effective nef divisor, $X:=X'\setminus Z$. Then $X$ is not Hartogs if and only if there exists a proper fibration $X\to C$ over an affine curve.
\end{corollary1}

This corollary implies that the Hartogs property holds for $X'\setminus Z$ provided every compact analytic set is removable. We say a compact analytic set $A\subset X$ is \textbf{removable} if the restriction homomorphism $\Gamma(X, \mathcal{O}_{X}) \to \Gamma(X\setminus A,\mathcal{O}_{X})$ is an isomorphism (equivalently, by the Riemann extension theorem, every holomorphic function on $X\setminus A$ is locally bounded at $A$). More precisely:

\begin{corollary1}
Let $X'$ be a complex projective manifold, $\dim X'>1$, $Z$ a connected analytic subset of codimension one supporting an effective nef divisor, $X:=X'\setminus Z$. Then $X$ is Hartogs if and only if every compact analytic subset of $X$ of codimension one is removable.
\end{corollary1}

For example, if a quasi-projective manifold $X=X'\setminus Z$ as above has no compact complex analytic subsets of codimension one, then it is Hartogs.

In Section \ref{examples} we consider an example where $D=Z$ is a nonsingular curve with $Z^{2}=0$ on a ruled surface.

In Section \ref{conclusion} we discuss the Question for effective but not necessarily nef divisors and the case of K\"ahler manifolds.

\section{Preliminaries}
In this section, we recall some classical results from Ueda theory and on the existence of fibrations. We also prove some auxiliary results needed for the proof of the main theorem and its corollaries.

\subsection{Ueda theory}\label{ueda}
In this section, we recall basic facts from Ueda theory (see \cite{Neeman, Claudon} for more details).

Let $X'$ be a K\"ahler manifold, $Z\subset X'$ a nonsingular compact complex hypersurface, and $U\subset X'$ a small neighborhood of $Z$ (so that $U$ and $Z$ have the same homotopy type). Assume that the normal bundle $N_{Z,X'}=\mathcal{O}_{Z}(Z)$ is topologically torsion; hence $N_{Z,X'}$ carries a flat unitary connection corresponding to a representation $\rho_{Z}\colon \pi_{1}(Z)\to S^{1}$. Since $U$ and $Z$ are homotopically equivalent, we obtain a line bundle $\widetilde{N_{Z,X'}}$ over $U$ with a flat unitary connection. The line bundle $\mathcal{O}_{U}(Z)$ is another extension of $N_{Z,X'}$. We call $\mathcal{U}=\mathcal{O}_{U}(Z)\otimes \widetilde{N_{Z,X'}}^{-1}$ the Ueda line bundle over $U$.

Let $\mathcal{I}\subset \mathcal{O}_{X'}$ be the ideal sheaf of $Z$, and let $Z(k)=\operatorname{Spec}(\mathcal{O}_{U}/\mathcal{I}^{k+1})$ be the $k$-th infinitesimal neighborhood of $Z$.

The Ueda type of $Z$, denoted by $\operatorname{utype}(Z)$, is defined as follows. If $\mathcal{U}|_{Z(l)}\cong \mathcal{O}_{Z(l)}$ for all $l<k$ but $\mathcal{U}|_{Z(k)}\not\cong \mathcal{O}_{Z(k)}$, then $\operatorname{utype}(Z)=k$. If $\mathcal{U}|_{Z(l)}\cong \mathcal{O}_{Z(l)}$ for all $l\in\mathbb{Z}_{\geq 0}$, then $\operatorname{utype}(Z)=\infty$.

\begin{proposition}\label{uedabase2}
Let $Z\subset U\subset X'$ and $Y\subset V\subset W'$ be as above. Consider the commutative square:
$$\xymatrix{
Z \ar@{->>}[d]^{f} \ar@{^{(}->}[r]^{i_{Z}} & U \ar[d]^{g}\\
Y \ar@{^{(}->}[r]^{i_{Y}} & V}$$
where $i_{Z}, i_{Y}$ are closed immersions, $g^{-1}(Y)=Z$, the normal bundle $N_{Y,V}$ is topologically torsion, and $f$ is surjective. Then
$$\operatorname{utype}(Z)=r\cdot \operatorname{utype}(Y),$$
where $r$ is the ramification index of $g$ along $Z$ (i.e., the ramification of $g$ along a generic transversal slice to $Y$).
\end{proposition}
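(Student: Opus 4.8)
The plan is to relate the two Ueda line bundles through $g$ and then to track the Ueda obstruction classes level by level. Write $\mathcal{U}_{Z}=\mathcal{O}_{U}(Z)\otimes\widetilde{N_{Z,X'}}$ and $\mathcal{U}_{Y}=\mathcal{O}_{V}(Y)\otimes\widetilde{N_{Y,V}}$, let $\mathcal{I}_{Z},\mathcal{I}_{Y}$ be the corresponding ideal sheaves, and for a line bundle $\mathcal{L}$ on $U$ that is trivial on $Z$ write $utype_{Z}(\mathcal{L})$ for the first level $k$ at which $\mathcal{L}|_{Z(k)}\not\cong\mathcal{O}_{Z(k)}$, so that $utype(Z)=utype_{Z}(\mathcal{U}_{Z})$. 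Since $g^{-1}(Y)=Z$ and $r$ is the ramification index of $g$ along $Z$, the pulled-back Cartier divisor is $g^{*}Y=rZ$; hence $g^{*}\mathcal{I}_{Y}=\mathcal{I}_{Z}^{r}$ and $g^{*}\mathcal{O}_{V}(Y)=\mathcal{O}_{U}(rZ)$. As $Z,Y$ are connected and $r$ is read off a generic transversal slice, I may assume $Z,Y$ irreducible.

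First I would prove the identity $g^{*}\mathcal{U}_{Y}\cong\mathcal{U}_{Z}^{\otimes r}$ on $U$. Restricting $g^{*}Y=rZ$ to $Z$ gives $f^{*}N_{Y,V}\cong N_{Z,X'}^{\otimes r}$ as holomorphic line bundles, both in $\mathrm{Pic}^{0}$ of the compact K\"ahler manifolds $Y$ and $Z$. On a compact K\"ahler manifold the sequences $0\to\mathbb{Z}\to\mathbb{R}\to S^{1}\to 0$ and $0\to\mathbb{Z}\to\mathcal{O}\to\mathcal{O}^{*}\to 0$ identify $\mathrm{Pic}^{0}$ with $H^{1}(\cdot,S^{1})$, so a topologically torsion line bundle carries a unique flat unitary structure and this correspondence is functorial. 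Transporting the flat bundles to $U,V$ through the homotopy equivalences $U\simeq Z$, $V\simeq Y$, and using that $g$ induces $f$ on fundamental groups, functoriality forces $g^{*}\widetilde{N_{Y,V}}\cong\widetilde{N_{Z,X'}}^{\otimes r}$. Tensoring with $g^{*}\mathcal{O}_{V}(Y)=\mathcal{O}_{U}(rZ)$ then yields $g^{*}\mathcal{U}_{Y}\cong\mathcal{U}_{Z}^{\otimes r}$.

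Next I would show that passing to the $r$-th power leaves the Ueda type unchanged, that is $utype_{Z}(\mathcal{U}_{Z})=utype_{Z}(\mathcal{U}_{Z}^{\otimes r})$. For a bundle trivial on $Z(k-1)$ the obstruction to triviality on $Z(k)$ lies in $H^{1}(Z,N_{Z,X'}^{-k})$, read off the leading term of normalized transition functions $t_{\alpha\beta}=1+a_{\alpha\beta}+\cdots$. Below the first obstruction level both $\mathcal{U}_{Z}$ and $\mathcal{U}_{Z}^{\otimes r}$ are trivial; at that level $\mathcal{U}_{Z}^{\otimes r}$ has transition functions $t_{\alpha\beta}^{r}=1+r\,a_{\alpha\beta}+\cdots$, so its obstruction is $r$ times that of $\mathcal{U}_{Z}$ in the complex vector space $H^{1}(Z,N_{Z,X'}^{-k})$, where multiplication by $r\ge 1$ is injective. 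Hence the two first nonvanishing levels coincide, and by Step 1 $utype(Z)=utype_{Z}(\mathcal{U}_{Z}^{\otimes r})=utype_{Z}(g^{*}\mathcal{U}_{Y})$.

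Finally I would compute $utype_{Z}(g^{*}\mathcal{U}_{Y})$ in terms of $s:=utype(Y)$. From $g^{*}\mathcal{I}_{Y}=\mathcal{I}_{Z}^{r}$ one gets $g^{-1}(Y(l))=Z(r(l+1)-1)$, so triviality of $\mathcal{U}_{Y}$ on $Y(s-1)$ pulls back to triviality of $g^{*}\mathcal{U}_{Y}$ on $Z(rs-1)$, giving $utype_{Z}(g^{*}\mathcal{U}_{Y})\ge rs$ (and the case $s=\infty$ likewise yields $\infty$). For the reverse I would use the local normal form $g^{*}w=z^{r}\cdot(\mathrm{unit})$ with $w,z$ transverse coordinates: a level-$s$ cocycle $1+c_{\alpha\beta}w^{s}+\cdots$ representing $u_{s}(Y)\in H^{1}(Y,N_{Y,V}^{-s})$ pulls back to $1+(f^{*}c_{\alpha\beta})z^{rs}+\cdots$, all intermediate orders vanishing, so the level-$rs$ obstruction of $g^{*}\mathcal{U}_{Y}$ is $f^{*}u_{s}(Y)$ under $f^{*}N_{Y,V}^{-s}\cong N_{Z,X'}^{-rs}$. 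Since $f$ is surjective, restricting to a generic transversal slice makes $f$ finite, and the trace map shows $f^{*}$ is injective on this $H^{1}$; thus $f^{*}u_{s}(Y)\ne 0$ and $utype_{Z}(g^{*}\mathcal{U}_{Y})=rs$. Combining the three steps gives $utype(Z)=r\cdot utype(Y)$. The main obstacle is this last step: forcing the obstruction of $g^{*}\mathcal{U}_{Y}$ to appear at exactly level $rs$ rather than somewhere in $[rs,rs+r-1]$, together with the nonvanishing $f^{*}u_{s}(Y)\ne 0$; both rest on the ramification normal form and on injectivity of $f^{*}$, which is clear when $f$ is finite but needs the transversal-slice reduction in general.
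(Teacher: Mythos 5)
The paper does not actually prove this proposition; it simply cites \cite[Proposition 8.4]{Neeman}. Your outline supplies a genuine argument, and its skeleton --- establishing $g^{*}\mathcal{U}_{Y}\cong\mathcal{U}_{Z}^{\otimes r}$ via uniqueness of flat unitary structures on compact K\"ahler manifolds, showing the Ueda type is insensitive to taking the $r$-th tensor power, and then reading off the first obstruction of $g^{*}\mathcal{U}_{Y}$ at level $rs$ from the local normal form $g^{*}w=z^{r}\cdot(\mathrm{unit})$ --- is the right strategy and essentially the standard one. The inequality $utype(Z)\geq r\cdot utype(Y)$ (triviality on $Y(s-1)$ pulls back to triviality on $Z(rs-1)$) is clean and complete as you state it, and this is in fact the only direction the paper uses (in the Remark of Section 3.1, where $utype(Y)=\infty$).

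The reverse inequality, however, has a genuine gap in how you handle the obstruction classes. Given a line bundle $\mathcal{L}$ trivial on $Z(k-1)$, its triviality on $Z(k)$ is \emph{not} equivalent to the vanishing of a class in $H^{1}(Z,N_{Z}^{-k})$: from $1\to \mathcal{I}^{k}/\mathcal{I}^{k+1}\to\mathcal{O}_{Z(k)}^{*}\to\mathcal{O}_{Z(k-1)}^{*}\to 1$ the relevant group is the quotient $H^{1}(Z,N_{Z}^{-k})/\mathrm{im}\,\delta$, where $\delta\colon H^{0}(Z(k-1),\mathcal{O}^{*})\to H^{1}(Z,N_{Z}^{-k})$ is the connecting map. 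This indeterminacy is nontrivial precisely when $N_{Z}$ is torsion of order less than $k$ (so that $Z(k-1)$ carries nonconstant invertible functions) --- a case that cannot be excluded here and is central elsewhere in the paper. Both your Step 2 (``multiplication by $r$ is injective on $H^{1}$'') and your Step 3 (``$f^{*}u_{s}(Y)\neq 0$'') are carried out in $H^{1}$ itself and do not show that the classes survive in these quotients; in particular you would need that $f^{*}$ of a class not in $\mathrm{im}\,\delta_{Y}$ does not land in $\mathrm{im}\,\delta_{Z}$, which does not follow from injectivity of $f^{*}$ alone. Separately, your justification of that injectivity (``restricting to a generic transversal slice makes $f$ finite'') does not parse --- the transversal slice is transverse to $Y$ inside $V$ and has nothing to do with the fibers of $f\colon Z\to Y$; the correct route is Stein factorization $Z\to Y'\to Y$, using the trace splitting of $\mathcal{O}_{Y}\to h_{*}\mathcal{O}_{Y'}$ for the finite part and $\mathcal{O}_{Y'}\cong(\text{connected-fiber part})_{*}\mathcal{O}_{Z}$ with the Leray sequence for the rest. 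These points are exactly where the cited proof in Neeman's memoir invests its effort, so as written your argument proves the inequality $utype(Z)\geq r\cdot utype(Y)$ but not the stated equality in full generality.
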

\begin{proof}
\cite[Proposition 8.4, p. 61]{Neeman}.
\end{proof}

\begin{proposition}\label{uedabase3}
Suppose $X'$ and $Z$ are as above, with $X'$ compact, and $N_{Z,X'}$ is torsion of order $k$ with $\operatorname{utype}(Z)>k$. Then $\operatorname{utype}(Z)=\infty$ and $kZ$ is a fiber of a fibration on $X'$.
\end{proposition}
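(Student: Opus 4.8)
The plan is to strip the statement down to a single cohomological obstruction, prove the rigidity phenomenon ``type $\le k$ or type $\infty$'' that is peculiar to the finite-order case, and only then upgrade the resulting formal triviality to an honest fibration on the compact manifold $X'$. For the first step I would exploit the torsion hypothesis to normalize the Ueda bundle. Since $N_{Z,X'}$ is unitary flat of finite order $k$, the representation $\rho_{Z}\colon\pi_{1}(Z)\to S^{1}$ satisfies $\rho_{Z}^{\otimes k}=1$, so $\widetilde{N_{Z,X'}}^{\otimes k}$ is the flat extension of the trivial representation and is therefore $\cong\mathcal{O}_{U}$; consequently $\mathcal{U}^{\otimes k}\cong\mathcal{O}_{U}(kZ)$. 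The hypothesis $utype(Z)>k$ says exactly that $\mathcal{U}|_{Z(k)}\cong\mathcal{O}_{Z(k)}$, and raising to the $k$-th power gives $\mathcal{O}_{X'}(kZ)|_{Z(k)}\cong\mathcal{O}_{Z(k)}$, i.e. $kZ$ is holomorphically trivial on its own $k$-th infinitesimal neighbourhood. Reading the successive extension classes off the sequences $0\to N_{Z,X'}^{\otimes(-\ell)}\to\mathcal{O}^{*}_{Z(\ell)}\to\mathcal{O}^{*}_{Z(\ell-1)}\to 0$, the obstruction to passing from $Z(\ell-1)$ to $Z(\ell)$ lies in $H^{1}(Z,N_{Z,X'}^{\otimes(-\ell)})$, a group depending only on $\ell\bmod k$, the principal case $k\mid\ell$ being $H^{1}(Z,\mathcal{O}_{Z})=H^{0,1}(Z)$; the assumption kills every such class through level $k$.

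The heart of the argument, and the step I expect to be the main obstacle, is to show that once the obstructions vanish through level $k$ they vanish at all higher levels, so that $utype(Z)=\infty$. I would argue by induction on $\ell$: assuming the trivialization extends to $Z(\ell-1)$, the next obstruction is represented by a $\bar\partial$-closed cocycle with coefficients in the unitary flat bundle $N_{Z,X'}^{\otimes(-\ell)}$ on the \emph{compact Kähler} hypersurface $Z$, and I expect the $\partial\bar\partial$-lemma together with harmonic theory for flat unitary coefficients to force it to be cohomologous to a class already killed at the previous stage. The decisive point is that the \emph{finiteness} of the order removes the small-divisor resonances that, in the non-torsion case, allow Ueda's recursion to produce arbitrary intermediate finite types; here the recursion is rigid. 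Equivalently, I could pass to the $k$-fold cyclic cover unwinding the torsion (ramification index $k$ along $Z$), where the normal bundle of the preimage $\widetilde Z$ becomes holomorphically trivial, apply the multiplicativity $utype(\widetilde Z)=k\cdot utype(Z)$ of Proposition \ref{uedabase2}, and run the same harmonic argument with trivial coefficients on the compact Kähler $\widetilde Z$. I flag this as the crux precisely because purely local (germ) data does permit finite types above $k$, so the proof must genuinely use both the finiteness of the order and the compact Kähler structure of $Z$.

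Finally I would convert infinite type into a global fibration. Because the order is finite, the formal trivialization of $\mathcal{O}_{U}(kZ)$ obtained above converges on an actual neighbourhood $U$ of $Z$, producing a holomorphic submersion $w\colon U\to\Delta$ with $w^{-1}(0)=kZ$, that is, a local first integral exhibiting $kZ$ as a fibre. It then remains to globalize: $w$ and the canonical section cutting out $kZ$ span a pencil in $H^{0}(U,\mathcal{O}(kZ))$, and using the compactness of $X'$ together with the properness machinery recalled in Section \ref{prophart} (Stein factorization of the associated map) I would extend this to a surjective holomorphic map $f\colon X'\to C$ onto a compact curve having $kZ$ as a fibre. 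The existence of such an $f$ furnishes a global first integral near $Z$ and in particular reconfirms $utype(Z)=\infty$, completing the proof.
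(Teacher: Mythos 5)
The paper does not actually prove this proposition; it is quoted from the literature (Neeman, Theorem~5.1, and Claudon--Loray--Pereira--Touzet, Theorem~1.3). Your opening reduction is correct and standard: torsion of order $k$ gives $\mathcal{U}^{\otimes k}\cong\mathcal{O}_{U}(kZ)$, and $utype(Z)>k$ then gives $\mathcal{O}(kZ)|_{Z(k)}\cong\mathcal{O}_{Z(k)}$. Note that this is exactly the hypothesis of Proposition~\ref{uedabase4}, so in the projective case you could already stop and invoke it; the content of the present proposition is precisely that this implication holds, and it is not soft.

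The gap is at the step you yourself flag as the crux, and it is fatal as proposed. The fact that the obstruction group $H^{1}(Z,N_{Z,X'}^{\otimes(-\ell)})$ depends only on $\ell\bmod k$ does not mean that the obstruction \emph{class} at level $\ell+k$ is the one already killed at level $\ell$: these are different elements of the same (generically nonzero) vector space --- for $k\mid\ell$ it is $H^{1}(Z,\mathcal{O}_{Z})$, of dimension $g(Z)$. No $\partial\bar\partial$-lemma or harmonic-theory argument with flat unitary coefficients makes such a class vanish unconditionally; if it did, the level-one obstruction would always vanish, which already fails for the non-split ruled surface $\mathbb{P}_{C}(\mathcal{E})$ with $\mathcal{L}=\mathcal{O}_{C}$ of Section~4.1, where $N_{Z}$ is trivial and $utype(Z)=1$. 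The actual proofs run in the opposite order from yours: starting from the level-$k$ triviality they \emph{construct} a holomorphic function $w$ near $Z$ with $\mathrm{div}(w)=kZ$ --- either by Ueda's iteration with sup-norm estimates, where finiteness of the order is what bounds the denominators $|1-t^{\ell}|$ away from zero for $\ell\not\equiv 0\bmod k$ and the resonant levels are absorbed by the special form of the equation satisfied by $w^{k}$, or by the foliation-theoretic arguments of Claudon--Loray--Pereira--Touzet --- and only then deduce the fibration and, a posteriori, $utype(Z)=\infty$. Your step~3 assumes exactly this convergence (``the formal trivialization converges'') without proof, and your proposed shortcut via a degree-$k$ cover ramified along $Z$ with $utype(\widetilde Z)=k\cdot utype(Z)$ from Proposition~\ref{uedabase2} is not available either: a cover of a neighbourhood of $Z$ ramified to order $k$ along $Z$ presupposes a $k$-th root of the divisor $Z$, essentially the function $w$ you are trying to build, while the cover naturally attached to the torsion bundle is an \'etale cover of $Z$ itself; and in any case reducing to trivial normal bundle ($k=1$) leaves the whole difficulty intact. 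The final globalization from $U\to\Delta$ to $X'\to C$ also requires compactness of $X'$ and an argument (a covering family of compact divisors, or extension of the pencil), not just Stein factorization of a map defined only on $U$.
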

\begin{proof}
\cite[Theorem 5.1, p. 109]{Neeman} or \cite[Theorem 1.3]{Claudon}.
\end{proof}

In the projective case, this can be generalized as follows.

\begin{proposition}\label{uedabase4}
Let $X'$ be a projective complex manifold and $D$ an effective divisor with connected support. Suppose $\mathcal{O}_{D}(D)$ is torsion of order $k$. If $\mathcal{O}_{(k+1)D}(kD)\cong \mathcal{O}_{(k+1)D}$, then $D$ is a fiber of a fibration $X'\to C'$ over a curve $C'$.
\end{proposition}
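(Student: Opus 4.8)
The plan is to reduce the statement to the K\"ahler Ueda theorem recorded in Proposition \ref{uedabase3}. Since that result is stated for a \emph{smooth} hypersurface, I would first settle the model case in which $Z=\mathrm{supp}(D)$ is smooth and $D=Z$ is reduced, so that $\mathcal{O}_D(D)=N_{Z,X'}$ is a flat unitary line bundle of torsion order $k$. Here I would translate the hypothesis into the language of the Ueda line bundle $\mathcal{U}=\mathcal{O}_U(Z)\otimes\widetilde{N_{Z,X'}}$ of Subsection \ref{ueda}: as $\widetilde{N_{Z,X'}}$ is the flat extension attached to a representation of order $k$, its $k$-th tensor power is trivial, whence $\mathcal{U}^{\otimes k}\cong\mathcal{O}_U(kZ)$. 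Thus the assumption $\mathcal{O}_{(k+1)Z}(kZ)\cong\mathcal{O}_{(k+1)Z}$ says exactly that $\mathcal{U}^{\otimes k}|_{(k+1)Z}\cong\mathcal{O}_{(k+1)Z}$.

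The core of the argument is then to promote this to triviality of $\mathcal{U}$ itself on $(k+1)Z$. Since $\mathcal{U}|_Z\cong\mathcal{O}_Z$ by construction, the class $\alpha:=\mathcal{U}|_{(k+1)Z}$ lies in the kernel $K:=\ker\bigl(\mathrm{Pic}((k+1)Z)\to\mathrm{Pic}(Z)\bigr)$. Via the logarithm on the nilpotent ideal $\mathcal{O}(-Z)/\mathcal{O}(-(k+1)Z)$, the group $K$ is a successive extension of the finite-dimensional $\mathbb{C}$-vector spaces $H^1(Z,N_{Z,X'}^{\otimes(-j)})$, $1\le j\le k$, and is therefore torsion-free as an abelian group. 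Because $\alpha^{\otimes k}=\mathcal{U}^{\otimes k}|_{(k+1)Z}$ is trivial, torsion-freeness forces $\alpha$ to be trivial, i.e. $\mathcal{U}|_{(k+1)Z}\cong\mathcal{O}_{(k+1)Z}$ and hence $utype(Z)>k$. With $N_{Z,X'}$ of torsion order $k$ and $utype(Z)>k$, Proposition \ref{uedabase3} applies verbatim and produces a fibration $X'\to C$ admitting $kZ$ as a fibre, which is the desired conclusion in the model case.

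To remove the smoothness and reducedness hypotheses I would pass to a log-resolution $\mu\colon\widetilde{X}\to X'$ of the pair $(X',Z)$, chosen to be an isomorphism over the generic point of each component of $Z$ and so that the strict transform $\widetilde{Z}$ is smooth. The idea is to transfer the two numerical hypotheses to $\widetilde{Z}$, run the model case upstairs to obtain a fibration $\widetilde{f}\colon\widetilde{X}\to C$ with $\widetilde{Z}$ contained in a fibre, and then descend $\widetilde f$ along $\mu$: since the $\mu$-exceptional locus sits over $Z$, the expectation is that it is swallowed by the fibre through $\widetilde{Z}$, so that $\widetilde f$ is constant on the fibres of $\mu$ and factors as $\widetilde f=f\circ\mu$ for a fibration $f\colon X'\to C$ whose fibre is supported on $Z$. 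Controlling the multiplicities of $D$ along its components and the ramification introduced by $\mu$ is where I would invoke Proposition \ref{uedabase2}, whose formula $utype=r\cdot utype$ is precisely tailored to track the Ueda type through such proper maps.

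I expect this last transfer-and-descent step to be the main obstacle. Blowing up changes the normal datum $\mathcal{O}_D(D)$, the exact order of its torsion, and the filtration of the infinitesimal neighbourhoods, so certifying that the clean hypotheses ``$\mathcal{O}_D(D)$ of torsion order $k$'' and ``$\mathcal{O}_{(k+1)D}(kD)\cong\mathcal{O}_{(k+1)D}$'' survive on $\widetilde{Z}$ requires careful bookkeeping with Proposition \ref{uedabase2}. Moreover the descent is not automatic, since a priori the resolved fibration need not contract every component of $\mu^{-1}(Z)$; ruling this out, equivalently showing that the whole of $\mu^{-1}(Z)$ lands in a single fibre, is the delicate point on which the reduction to the smooth model ultimately rests.
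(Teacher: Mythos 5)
The paper does not prove this proposition at all: its ``proof'' is a citation of \cite[Theorem 2.3]{Loray}, so any self-contained argument is necessarily a different route. Your model case (a single smooth, reduced, irreducible $Z$) is essentially sound and is a nice way to recover Proposition \ref{uedabase3} in this situation: the kernel of $\mathrm{Pic}((k+1)Z)\to\mathrm{Pic}(Z)$ is filtered by quotients of the $\mathbb{C}$-vector spaces $H^{1}(Z,N_{Z,X'}^{-j})$ by images of the (divisible) groups $H^{0}(Z(j-1),\mathcal{O}^{*})$, hence is torsion-free, so $k$-torsion of the Ueda class forces its vanishing and $utype(Z)>k$. (Two small points: with the paper's literal definition $\mathcal{U}=\mathcal{O}_U(Z)\otimes\widetilde{N_{Z,X'}}$ one gets $\mathcal{U}|_Z\cong N^{\otimes 2}$; you need the dual in the second factor, as in the standard definition. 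And Proposition \ref{uedabase3} yields that $kZ$, not $Z$, is a fiber, which already shows the conclusion as literally stated needs care when $k>1$.)

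The genuine gap is the general case, which is the one the paper actually uses: in Lemma \ref{crucialcor4} the proposition is applied to $R+mD$, an effective divisor with several smooth components meeting each other and carrying multiplicities. Your reduction to the model case via log-resolution is not carried out, and as described it does not work. Proposition \ref{uedabase3} is a statement about a single smooth hypersurface with topologically torsion normal bundle; after a log-resolution the relevant divisor is a simple normal crossing divisor with many components, not a smooth hypersurface, so there is no ``model case'' upstairs to invoke. Moreover the pullback $\mu^{*}D$ acquires exceptional components along which $\mathcal{O}_{\mu^{*}D}(\mu^{*}D)$ is no longer controlled by the original torsion hypothesis, and Proposition \ref{uedabase2} tracks the Ueda type only under the maps of the specific form stated there (with $g^{-1}(Y)=Z$ and $N_{Y,V}$ torsion), not under an arbitrary blow-up of the ambient space. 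Finally, the descent step (that the resolved fibration contracts all of $\mu^{-1}(Z)$ and factors through $\mu$) is asserted as an expectation rather than proved. The proof of \cite[Theorem 2.3]{Loray} for general effective divisors proceeds quite differently, by propagating the triviality $\mathcal{O}_{(k+1)D}(kD)\cong\mathcal{O}_{(k+1)D}$ to all infinitesimal neighbourhoods $\mathcal{O}_{(nk+1)D}(nkD)$ and then producing sections of $\mathcal{O}_{X'}(nkD)$ via formal-function/algebraization arguments on the projective manifold $X'$; some input of this kind is unavoidable, and your proposal does not supply it. As it stands, only the smooth reduced irreducible case is established.
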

\begin{proof}
\cite[Theorem 2.3]{Loray}.
\end{proof}

For projective surfaces, finiteness of the Ueda type of a smooth curve implies the following analytic properties of its complement.

\begin{proposition}\label{uedabase1}
Let $X'$ be a projective surface and $Z\subset X'$ a connected smooth projective curve with $\operatorname{utype}(Z)<\infty$. Then $X'\setminus Z$ is holomorphically convex and contains only finitely many compact curves $C$ (which can be contracted, in particular $C^{2}<0$). Moreover, $X'\setminus Z$ has no nonconstant regular functions, i.e., $\mathbb{C}[X'\setminus Z]=\mathbb{C}$.
\end{proposition}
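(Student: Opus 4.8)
The plan is to read the statement off Ueda's classification together with Grauert's theory of strongly pseudoconvex spaces. Finiteness of $utype(Z)$ places $Z\hookrightarrow X'$ in Ueda's class $\alpha$, so by \cite{Ueda} (see also \cite{Neeman}) the curve $Z$ admits a fundamental system of strongly pseudoconcave neighbourhoods; concretely, for a small neighbourhood $U$ of $Z$ there is a smooth strictly plurisubharmonic function $\psi$ on the punctured neighbourhood $U\setminus Z$ with $\psi\to+\infty$ along $Z$. Since the only end of $X=X'\setminus Z$ is the one approaching $Z$, I would patch $\psi$ (on the tube $U\setminus Z$) with an arbitrary smooth function on the compact piece $X'\setminus U$ to produce a smooth proper exhaustion $\varphi\colon X\to\mathbb{R}$ that is strictly plurisubharmonic outside the compact set $X'\setminus U$. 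Thus $X$ is $1$-convex (strongly pseudoconvex), and Grauert's theorem yields that $X$ is holomorphically convex.

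For the compact curves I would pass to the Remmert reduction $r\colon X\to X^{*}$ supplied by Grauert's theorem: $X^{*}$ is a normal Stein space, $r$ is proper with connected fibres, and $r$ contracts the maximal compact analytic subset $A\subset X$ to finitely many points while being biholomorphic on $X\setminus A$. In particular $\dim X^{*}=\dim X=2$, so $r$ is a proper modification of a Stein surface and $A$ is a finite union of compact curves. By Grauert's contractibility criterion the intersection matrix of the components of $A$ is negative definite, so each such curve $C$ satisfies $C^{2}<0$ and can be blown down. Finally, any compact curve $C\subset X$ has image $r(C)$ a compact analytic subset of the Stein space $X^{*}$, hence a point, so $C\subset A$; as $A$ has finitely many components, $X$ contains only finitely many compact curves, as claimed.

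It remains to rule out non-constant regular functions. Here I would use the identification $\mathbb{C}[X'\setminus Z]=\varinjlim_{m}H^{0}(X',\mathcal{O}_{X'}(mZ))$, so it suffices to prove $h^{0}(X',\mathcal{O}_{X'}(mZ))=1$ for all $m\ge 0$, i.e. that the Iitaka dimension $k(Z)=0$. Since $Z^{2}=0$ (the normal bundle being topologically torsion) one has $\nu(Z)=1$; if $k(Z)\ge 1$ then, by Zariski's lemma applied to the linear system $|mZ|$ with $(mZ)^{2}=0$, a positive multiple of the irreducible curve $Z$ would be a fibre of a fibration $X'\to C$, forcing $utype(Z)=\infty$ (cf. Propositions \ref{uedabase3} and \ref{uedabase4}) and contradicting the hypothesis. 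Hence $k(Z)=0$, so the only effective divisor linearly equivalent to $mZ$ is $mZ$ itself. Consequently a non-constant $f\in\mathbb{C}[X'\setminus Z]$ with pole divisor $aZ$, $a\ge 1$, would have zero divisor $(f)_{0}\sim aZ$ equal to $aZ$, contradicting the disjointness of the supports of zeros and poles; thus $a=0$, $f$ extends regularly to the projective manifold $X'$ and is therefore constant. Hence $\mathbb{C}[X'\setminus Z]=\mathbb{C}$.

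The main obstacle I anticipate is the first step: correctly extracting from the finiteness of the Ueda type a strictly plurisubharmonic exhaustion with the right sign near $Z$ (this is precisely the content of Ueda's class-$\alpha$ analysis and the point where $utype<\infty$ is genuinely used), and then checking that the gluing preserves strict plurisubharmonicity throughout the end. The $k(Z)=0$ step is a secondary technical point, resting on the implication ``$Z$ (a multiple of) a fibre $\Rightarrow utype(Z)=\infty$'', which I would take from the cited Ueda-theoretic results.
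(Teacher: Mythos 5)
Your proposal is correct and follows the standard route: the paper itself gives no argument but merely cites Neeman's memoir, whose Propositions 5.3, 5.4 and 5.6 are proved exactly as you describe (Ueda's class-$\alpha$ strictly plurisubharmonic function near $Z$, Grauert's theorem and the Remmert reduction for the compact curves, and $k(Z)=0$ via the Zariski-type decomposition of $|mZ|$ for the absence of regular functions). The only slip is a citation: the implication ``$kZ$ a fibre $\Rightarrow utype(Z)=\infty$'' should be drawn from Proposition \ref{uedabase2} (with $Y$ a point of the base curve, which has infinite Ueda type), not from Propositions \ref{uedabase3} and \ref{uedabase4}, which go in the opposite direction.
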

\begin{proof}
\cite[Proposition 5.3, Corollary 5.4, Proposition 5.6]{Neeman}.
\end{proof}

\subsection{Lefschetz-type properties}\label{homologal}

Let $X'$ be a compact complex manifold, $Z$ a connected analytic subset, $X:=X'\setminus Z$, and let $j\colon Z\hookrightarrow X'$, $i\colon X \hookrightarrow X'$ be the canonical closed and open embeddings, respectively. For any compact set $K\subset X$, let $i_{K}\colon X\setminus K \hookrightarrow X$ be the canonical open embedding.

Let $i^{-1}$, $j^{-1}$, $i_{K}^{-1}$ be the (topological) inverse image functors, $i_{!}$ the extension by zero functor, and $j_{*}$, $(i_{K})_{*}$ the direct image functors. Recall the following basic facts (see, for instance, \cite{Shapira} or \cite{Manin}):

\begin{enumerate}
\item For any sheaf $\mathcal{F}$ on $X'$, we have the exact sequence:
$$0\to i_{!}i^{-1}\mathcal{F}\to\mathcal{F}\to j_{*}j^{-1}\mathcal{F}\to 0.$$

\item For any injective sheaf $\mathcal{I}$ on $X$, we have the exact sequence:
$$0\to \Gamma_{K}(\mathcal{I})\to\mathcal{I}\to (i_{K})_{*}i_{K}^{-1}\mathcal{I}\to 0,$$
where $\Gamma_{K}(\mathcal{I})$ is the sheaf given by $U\mapsto\Gamma_{K}(U,\mathcal{I}):=\{f\in\Gamma(U,\mathcal{I})\mid \text{support of } f \text{ is contained in } K\cap U\}$.

\item For any sheaf $\mathcal{F}$ on $X$, we have the canonical isomorphism
$$H^{*}_{c}(X,\mathcal{F})\cong\varinjlim\limits_{K} H^{*}_{K}(X,\mathcal{F}),$$
where the colimit is taken over all compact sets $K\subset X$.
\end{enumerate}

This implies the following lemma.

\begin{lemma}\label{mainlemma}
Let $X', X, Z$ be as above, and let $\mathcal{F}$ be a sheaf of $\mathbb{C}$-vector spaces (not necessarily coherent) on $X'$ such that $i^{-1}\mathcal{F}\cong i^{-1}\mathcal{O}_{X'}=\mathcal{O}_{X}$ (as sheaves of $\mathbb{C}$-vector spaces) and $\dim H^{1}(X',\mathcal{F})=m <\infty$. The following assertions are equivalent:
\begin{enumerate}
\item $X$ is Hartogs;
\item The canonical homomorphism $\Gamma(X',\mathcal{F})\to \Gamma(Z,j^{-1}\mathcal{F})$ is surjective;
\item $\dim H^{1}_{c}(X,\mathcal{O}_{X})\leq m$.
\end{enumerate}
\end{lemma}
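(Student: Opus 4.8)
The plan is to transport all three conditions to cohomology on the compact manifold $X'$ and to extract them from a single exact sequence. Since $i\colon X\hookrightarrow X'$ is an open embedding into a compact space, there is a canonical identification $H^{*}_{c}(X,\mathcal{O}_{X})\cong H^{*}(X',i_{!}\mathcal{O}_{X})$, and $H^{0}_{c}(X,\mathcal{O}_{X})=0$ because $X$ is connected and noncompact. Applying the long exact cohomology sequence on $X'$ to the first recalled short exact sequence $0\to i_{!}i^{-1}\mathcal{F}\to\mathcal{F}\to j_{*}j^{-1}\mathcal{F}\to 0$, using $i^{-1}\mathcal{F}=\mathcal{O}_{X}$ and $H^{0}(X',j_{*}j^{-1}\mathcal{F})=\Gamma(Z,j^{-1}\mathcal{F})$, I would obtain
\[
0\to\Gamma(X',\mathcal{F})\xrightarrow{\ \alpha\ }\Gamma(Z,j^{-1}\mathcal{F})\to H^{1}_{c}(X,\mathcal{O}_{X})\xrightarrow{\ \gamma'\ }H^{1}(X',\mathcal{F}).
\]
Here $\alpha$ is the map of assertion (2), so (2) is equivalent to the surjectivity of $\alpha$, hence to the injectivity of $\gamma'$, with $\operatorname{coker}\alpha\cong\ker\gamma'$.

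For (1)$\Rightarrow$(2) I would argue by gluing. Shrinking the compact set $K$ is cofinal with shrinking an open neighbourhood $U\supseteq Z$: the set $K_{U}:=X'\setminus U$ is compact in $X$ with $X\setminus K_{U}=U\setminus Z$, and $U\setminus Z$ is connected because $Z$ has codimension one. Thus $X$ is Hartogs exactly when every $f\in\Gamma(U\setminus Z,\mathcal{O})$ is the restriction of a global $\tilde f\in\Gamma(X,\mathcal{O})$. Given a class of $\Gamma(Z,j^{-1}\mathcal{F})$ represented by $s\in\Gamma(U,\mathcal{F})$, its restriction $s|_{U\setminus Z}$ is such an $f$ (as $\mathcal{F}|_{U\setminus Z}=\mathcal{O}$); the Hartogs extension $\tilde f$ and $s$ then agree on $U\setminus Z$ and glue to a global $S\in\Gamma(X',\mathcal{F})$ with $\alpha(S)=s$, proving $\alpha$ onto. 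Combining the second and third recalled facts (the local-cohomology sequence and $H^{*}_{c}=\varinjlim_{K}H^{*}_{K}$) with $H^{0}_{K}(X,\mathcal{O})=0$, the same cofinality identifies the Hartogs property with the injectivity of the forgetful map $\gamma\colon H^{1}_{c}(X,\mathcal{O}_{X})\to H^{1}(X,\mathcal{O}_{X})$; since $\gamma$ factors through $\gamma'$ via restriction $H^{1}(X',\mathcal{F})\to H^{1}(X,\mathcal{O}_{X})$, one has $\ker\gamma'\subseteq\ker\gamma$, which reconfirms (1)$\Rightarrow$(2).

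The implication (2)$\Rightarrow$(3) is then immediate, since $\gamma'$ injective embeds $H^{1}_{c}(X,\mathcal{O}_{X})$ into the $m$-dimensional space $H^{1}(X',\mathcal{F})$. The closing implication (3)$\Rightarrow$(1), which I expect to be the main obstacle, I would prove by contraposition: if $X$ is not Hartogs then $\ker\gamma\neq 0$, and I must show $H^{1}_{c}(X,\mathcal{O}_{X})$ is then infinite-dimensional, contradicting $m<\infty$. Writing $\ker\gamma=\varinjlim_{U}\Gamma(U\setminus Z,\mathcal{O})/\Gamma(X,\mathcal{O})$ with injective transition maps (restriction between connected punctured neighbourhoods is injective by the identity theorem), the heart of the matter is the dichotomy that this colimit is either $0$ or infinite-dimensional. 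The mechanism is the positive dimension of $Z$: a single non-extendable germ maps, via $0\to\Gamma(U,\mathcal{O})\to\Gamma(U\setminus Z,\mathcal{O})\to H^{1}_{Z}(U,\mathcal{O})$, into the local cohomology along $Z$, which is infinite-dimensional for a codimension-one $Z$ since it absorbs polar parts of unbounded order. Turning this local infinitude into infinitely many $\mathbb{C}$-independent global classes is the delicate point where the sheaf formalism alone does not suffice and the geometry of $Z\subset X'$ must enter; it is precisely where the argument meets the fibration picture of the Main Theorem, a non-extendable germ forcing a proper fibration over an affine curve whose infinitely many regular functions realise the infinite dimension of $H^{1}_{c}(X,\mathcal{O}_{X})$.
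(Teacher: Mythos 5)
Your reduction of all three assertions to the exact sequence
\[
0\to\Gamma(X',\mathcal{F})\to\Gamma(Z,j^{-1}\mathcal{F})\to H^{1}_{c}(X,\mathcal{O}_{X})\to H^{1}(X',\mathcal{F})
\]
coincides with the top row of the paper's diagram, and your gluing argument for $1\Rightarrow 2$ is a correct unpacking of the paper's observation that the first square is Cartesian because $i^{-1}\mathcal{F}\cong\mathcal{O}_{X}$; the step $2\Rightarrow 3$ is likewise fine. The problem is $3\Rightarrow 1$, which you yourself identify as the main obstacle and then do not prove. The route you gesture at is circular: ``a non-extendable germ forcing a proper fibration over an affine curve'' is the content of the Main Theorem and its Corollary, whose proofs in the paper rest on Corollary \ref{crucialcor}, which is itself a consequence of Lemma \ref{mainlemma}. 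You cannot appeal to the fibration picture inside the proof of the lemma that underlies it. Moreover, the dichotomy ``$\ker\gamma$ is either $0$ or infinite-dimensional'' is asserted but not established: the infinite-dimensionality of $H^{1}_{Z}(U,\mathcal{O})$ (polar parts of unbounded order) does not by itself produce infinitely many linearly independent classes in $H^{1}_{c}(X,\mathcal{O}_{X})$, since the map from local cohomology along $Z$ to global compactly supported cohomology can a priori have a large kernel.

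What the paper actually uses for $3\Rightarrow 1$ is the Andreotti--Hill mechanism (\cite[Corollary 4.3]{AndrHill}, detailed in \cite[Lemma 2.1]{Fek}), which works for an arbitrary compact $K$ with connected complement directly from $\dim H^{1}_{c}(X,\mathcal{O}_{X})<\infty$: for $f\in\Gamma(X\setminus K,\mathcal{O}_{X})$ the boundary classes $\delta(f),\delta(f^{2}),\dots,\delta(f^{m+1})\in H^{1}_{c}(X,\mathcal{O}_{X})$ are linearly dependent, so some nonconstant polynomial $P(f)$ has vanishing obstruction and extends to $F\in\Gamma(X,\mathcal{O}_{X})$; the relation $P(f)=F$ forces $f$ to be locally bounded near $K$, hence to extend by the Riemann removable singularity theorem, and connectedness of $X\setminus K$ guarantees the extension restricts back to $f$. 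Some argument of this kind (turning finite-dimensionality into algebraic dependence of powers, not into a fibration) is what is missing from your proposal.
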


\begin{proof}
We have the commutative diagram:
\[
\begin{diagram}
\node{\Gamma(X',\mathcal{F})} \arrow{e,t}{} \arrow{s,r}{}
\node{\Gamma(Z,j^{-1}\mathcal{F})} \arrow{e,t}{} \arrow{s,r}{}
\node{H^{1}_{c}(X,\mathcal{O}_{X})} \arrow{s,r}{\cong} \arrow{e,t}{} 
\node{H^{1}(X',\mathcal{F})}\arrow{s,r}{}
\\
\node{\Gamma(X,\mathcal{O}_{X})} \arrow{e,t}{r} 
\node{\varinjlim\limits_{K\subset X}\Gamma(X\setminus K,\mathcal{O}_{X})} \arrow{e,t}{c}
\node{H^{1}_{c}(X,\mathcal{O}_{X})} \arrow{e,t}{}
\node{H^{1}(X,\mathcal{O}_{X})}
\end{diagram}
\]

Note that since $Z$ and $X$ are connected, $X$ is Hartogs if and only if the canonical homomorphism $r$ is an isomorphism. Also, the first square of the diagram is Cartesian because $i^{-1}\mathcal{F}\cong \mathcal{O}_{X}$.

The implications $1\Rightarrow 2\Rightarrow 3$ are clear via diagram chasing. The proof of $3\Rightarrow 1$ follows the argument in \cite[Corollary 4.3]{AndrHill}, which we recall below.

We may assume that $\varinjlim\limits_{K\subset X}\Gamma(X\setminus K,\mathcal{O}_{X})\neq \mathbb{C}$. Consider an equivalence class $f\in \varinjlim\limits_{K\subset X}\Gamma(X\setminus K,\mathcal{O}_{X})$ of a nonconstant holomorphic function on $X\setminus K$. Suppose $c(f)\neq 0$. We may assume that $c(f^{i})\neq 0$ for all $1\leq i\leq m+1$. Then the elements $c(f), c(f^{2}),\dots, c(f^{m+1})$ are nonzero and linearly dependent. Thus, there exists a nonzero polynomial $P\in \mathbb{C}[T]$ of degree at most $m+1$ such that $c(P(f))=0$. It follows that there is a holomorphic function $H\in\Gamma(X,\mathcal{O}_{X})$ such that $r(H)=P(f)$.

Now the elements $c(f), c(r(H)f), \dots, c(r(H)^{m}f)$ are nonzero and linearly dependent. Hence, there exists a polynomial $P_{1}\in \mathbb{C}[T]$ such that $c(P_{1}(r(H))f)=0$. It follows that there exists a holomorphic function $F\in\Gamma(X,\mathcal{O}_{X})$ such that $r(F)=P_{1}(r(H))f$. Denoting $G=P_{1}(H)$, we obtain $r(F)=r(G)f$.

Since $r(G^{m+1}H)=r(G^{m+1}P(F/G))$, it follows that $G^{m+1}H=G^{m+1}P(F/G)$ on $X$. Hence, $H=P(F/G)$ on $X\setminus \{G=0\}$. Thus, $F/G\in \Gamma(X\setminus \{G=0\}, \mathcal{O}_{X})$ and is locally bounded on $X\setminus \{G=0\}$. Since $G\not\equiv 0$, the Riemann extension theorem implies that $F/G\in \Gamma(X,\mathcal{O}_{X})$ and $r(F/G)=f$. Therefore, the canonical homomorphism $r$ is an isomorphism.
\end{proof}

In particular, we obtain the following Lefschetz-type property, which generalizes the Grauert-Grothendieck theorem \cite[Theorem 3.1]{Bost} or \cite[p. 83]{Gro} on the Lefschetz property for algebraic vector bundles over projective manifolds.

\begin{corollary}
Let $X', X, Z$ be as above. If $X$ is Hartogs, then for any sheaf of $\mathbb{C}$-vector spaces $\mathcal{F}$ on $X'$ such that $i^{-1}\mathcal{F}\cong \mathcal{O}_{X}$ and $\dim H^{1}(X',\mathcal{F})=m <\infty$, the canonical homomorphism $\Gamma(X',\mathcal{F})\to \Gamma(Z,j^{-1}\mathcal{F})$ is surjective.
\end{corollary}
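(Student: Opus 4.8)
The plan is to read off the corollary directly from Lemma~\ref{mainlemma}, observing that the only datum that varies in the corollary---the choice of the sheaf $\mathcal{F}$---is decoupled from the Hartogs condition. Indeed, among the three equivalent assertions of Lemma~\ref{mainlemma}, the first one, that $X$ is Hartogs, is a property of the manifold $X$ alone and makes no reference to $\mathcal{F}$. This is the structural observation that makes the universal quantifier over $\mathcal{F}$ in the statement essentially free, and it is what I would use as the hinge of the argument.

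Concretely, first I would fix an arbitrary sheaf $\mathcal{F}$ of $\mathbb{C}$-vector spaces on $X'$ satisfying the two standing hypotheses $i^{-1}\mathcal{F}\cong\mathcal{O}_{X}$ and $\dim H^{1}(X',\mathcal{F})=m<\infty$. These are precisely the hypotheses required to invoke Lemma~\ref{mainlemma} for this particular $\mathcal{F}$, so the equivalence of assertions $1$, $2$, $3$ is available for it. Since by assumption $X$ is Hartogs, assertion $1$ holds, and the implication $1\Rightarrow 2$ of the lemma yields at once that the canonical restriction homomorphism $\Gamma(X',\mathcal{F})\to\Gamma(Z,j^{-1}\mathcal{F})$ is surjective. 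As $\mathcal{F}$ was arbitrary subject to the two hypotheses, this establishes the corollary.

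The point worth stressing is that there is no genuine obstacle here: the content has already been extracted inside the proof of Lemma~\ref{mainlemma}, where the commutative diagram with Cartesian left square translates the Hartogs property into the surjectivity of assertion $2$ through the intermediary bound $\dim H^{1}_{c}(X,\mathcal{O}_{X})\le m$. The corollary merely records that this surjectivity is forced for \emph{every} admissible $\mathcal{F}$ simultaneously, which is exactly why it reads as a Lefschetz-type statement: a single intrinsic extension property of $X$ controls the restriction of global sections to $Z$ across the whole class of such sheaves, paralleling how positivity forces restriction surjections in the classical Grauert--Grothendieck setting.
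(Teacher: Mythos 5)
Your proposal is correct and coincides with the paper's intended argument: the corollary is stated as an immediate consequence of Lemma~\ref{mainlemma}, obtained exactly by fixing an admissible $\mathcal{F}$ and applying the implication $1\Rightarrow 2$. Nothing further is needed.
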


The following corollary is our main tool.

\begin{corollary}\label{crucialcor}
Let $X'$ be a compact complex manifold, $D$ an effective divisor on $X'$ with connected support $Z$, and $X:=X'\setminus Z$. Then $X$ is Hartogs if and only if $\Gamma(Z,j^{-1}\mathcal{O}(-D))=0$.
\end{corollary}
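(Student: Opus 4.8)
The plan is to invoke Lemma \ref{mainlemma} with the concrete choice $\mathcal{F}:=\mathcal{O}(-D)$, the sheaf of sections of the line bundle associated with the effective divisor $-D$. First I would check that this $\mathcal{F}$ satisfies the two hypotheses of the lemma. For the restriction condition: since $D$ is effective, $\mathcal{O}(-D)=\mathcal{I}_{D}\subset\mathcal{O}_{X'}$ is the ideal sheaf of $D$, and as $\operatorname{supp}(D)=Z$ the canonical section $s_{D}$ of $\mathcal{O}(D)$ is nowhere vanishing on $X=X'\setminus Z$; this yields a canonical trivialization, so $i^{-1}\mathcal{O}(-D)\cong\mathcal{O}_{X}$ as sheaves of $\mathbb{C}$-vector spaces (indeed as $\mathcal{O}_{X}$-modules). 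For the finiteness condition: $\mathcal{O}(-D)$ is coherent (a line bundle) on the compact complex manifold $X'$, so by the Cartan--Serre finiteness theorem $m:=\dim H^{1}(X',\mathcal{O}(-D))<\infty$. Hence Lemma \ref{mainlemma} applies verbatim.

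Next I would read off the equivalence $(1)\Leftrightarrow(2)$ of Lemma \ref{mainlemma}: with $\mathcal{F}=\mathcal{O}(-D)$ it states precisely that $X$ is Hartogs if and only if the canonical restriction homomorphism $\Gamma(X',\mathcal{O}(-D))\to\Gamma(Z,j^{-1}\mathcal{O}(-D))$ is surjective. It then remains only to reformulate this surjectivity in the desired form. The key observation is that the source of this map vanishes: as $\mathcal{O}(-D)=\mathcal{I}_{D}$, a global section is a holomorphic function on $X'$ vanishing along $D$; but $X'$ is a compact connected manifold, so $\Gamma(X',\mathcal{O}_{X'})=\mathbb{C}$, and a constant vanishing on the nonempty divisor $D$ must be $0$. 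Therefore $\Gamma(X',\mathcal{O}(-D))=0$.

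Consequently the image of the restriction map is $0$, so the map is surjective if and only if its target $\Gamma(Z,j^{-1}\mathcal{O}(-D))$ is itself $0$. Combining this with the equivalence from Lemma \ref{mainlemma} gives exactly that $X$ is Hartogs if and only if $\Gamma(Z,j^{-1}\mathcal{O}(-D))=0$, which is the claim. I do not expect a genuine obstacle here: the entire content is the choice $\mathcal{F}=\mathcal{O}(-D)$ together with the vanishing $\Gamma(X',\mathcal{O}(-D))=0$, and everything else is formal once Lemma \ref{mainlemma} is in hand. The only points requiring a word of care are the canonical trivialization $i^{-1}\mathcal{O}(-D)\cong\mathcal{O}_{X}$ and the finiteness of $H^{1}(X',\mathcal{O}(-D))$, both of which are standard.
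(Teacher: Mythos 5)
Your proposal is correct and is exactly the argument the paper intends: the corollary is stated as an immediate consequence of Lemma \ref{mainlemma} applied to $\mathcal{F}=\mathcal{O}(-D)$, using the canonical trivialization of $\mathcal{O}(-D)$ on $X'\setminus Z$, Cartan--Serre finiteness for $H^{1}(X',\mathcal{O}(-D))$, and the vanishing $\Gamma(X',\mathcal{O}(-D))=0$ to convert surjectivity in equivalence $(1)\Leftrightarrow(2)$ into vanishing of the target. No gaps; this matches the paper's (implicit) proof.
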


\subsection{Moving divisors}\label{linsys}
Let $D$ be a divisor on a complex manifold. Consider the complete linear system associated with $D$:
$$|D|:=\{D'\in \operatorname{Div}(X)\mid D'\geq 0,\ D'\sim D\}.$$
We have the canonical bijection induced by $s\mapsto \operatorname{div}(s)$:
$$(\Gamma(X,\mathcal{O}(D))\setminus \{0\})/\Gamma(X,\mathcal{O}^{*})\cong |D|.$$

Geometrically, the condition $\Gamma(Z,j^{-1}\mathcal{O}(-D))=0$ in Corollary \ref{crucialcor} means the following. For an open neighborhood $U\supset Z$, consider the complete linear system
$$|-D|_{U}|:=\{D'\in \operatorname{Div}(U)\mid D'\geq 0,\ D'+D|_{U}\sim 0\}.$$

Taking the direct limit over all open neighborhoods $U\supset Z$, we obtain
$$\|-D\|:= \varinjlim\limits_{U\supset Z} |-D|_{U}|\cong (\Gamma(Z,j^{-1}\mathcal{O}(-D))\setminus \{0\})/\Gamma(Z,j^{-1}\mathcal{O}^{*}).$$

Hence the condition $\Gamma(Z,j^{-1}\mathcal{O}(-D))=0$ means that $\|-D\|=\emptyset$. Therefore,

\begin{lemma}\label{lemm22}
Let $X'$ be a compact complex manifold, $D$ an effective divisor on $X'$ with connected support $Z$, and $X:=X'\setminus Z$. Then $X$ is Hartogs if and only if $\|-D\|=\emptyset$.
\end{lemma}

Let $X'$ be a compact complex manifold, and let $Z\subset X'$ be a connected reduced analytic set of codimension one. Define the following poset of effective divisors whose support is exactly $Z$:
$$\operatorname{Div}_{Z}(X'):=\{D\in \operatorname{Div}(X')\mid D\geq 0,\ \operatorname{Supp}(D)=Z\}$$
with the partial order: $D_{2}\geq D_{1}$ if and only if $D_{2}-D_{1}$ is effective.

Lemma \ref{lemm22} implies the following corollary.

\begin{corollary}\label{lemm22'}
Let $X'$ be a compact complex manifold, $Z$ a connected reduced analytic set of codimension one, and $X:=X'\setminus Z$. Then the following are equivalent:
\begin{enumerate}
\item $X$ is Hartogs;
\item There exists $D\in \operatorname{Div}_{Z}(X')$ with $\|-D\|=\emptyset$;
\item For any $D\in \operatorname{Div}_{Z}(X')$, we have $\|-D\|=\emptyset$.
\end{enumerate}
\end{corollary}

If $D_{1},D_{2}\in \operatorname{Div}_{Z}(X')$ with $D_{2}> D_{1}$, then we have the canonical exact sequence
\begin{equation}\label{ex1}
0\to \mathcal{O}_{X'}(-D_{2})\to \mathcal{O}_{X'}(-D_{1})\to \mathcal{O}_{D_{2}-D_{1}}(-D_{1})\to 0,
\end{equation}
where $\mathcal{O}_{D_{2}-D_{1}}(-D_{1}):=(\mathcal{O}_{X'}/\mathcal{O}_{X'}(-(D_{2}-D_{1})))\otimes_{\mathcal{O}_{X'}} \mathcal{O}_{X'}(-D_{1})$.

Let $j\colon Z\hookrightarrow X'$ be the canonical embedding. Applying the topological inverse image functor $j^{-1}$ to (\ref{ex1}), we obtain the exact sequence of sheaves on $Z$:
$$0\to j^{-1}\mathcal{O}_{X'}(-D_{2})\to j^{-1}\mathcal{O}_{X'}(-D_{1})\to j^{-1}\mathcal{O}_{D_{2}-D_{1}}(-D_{1})\to 0$$
and the exact sequence:
\begin{equation}\label{ex2}
0\to \Gamma(Z, j^{-1}\mathcal{O}_{X'}(-D_{2}))\to \Gamma(Z, j^{-1}\mathcal{O}_{X'}(-D_{1}))\to \Gamma(Z, j^{-1}\mathcal{O}_{D_{2}-D_{1}}(-D_{1})).
\end{equation}

In particular, we have an inclusion $\|-D_{2}\|\hookrightarrow \|-D_{1}\|$ given by $G\mapsto G+(D_{2}-D_{1})$. Given a chain $\{D_{n}\}_{n>0}:=\{D_{n}\in \operatorname{Div}_{Z}(X')\mid D_{n}< D_{n+1},\ n\in\mathbb{Z}_{>0}\}$ in $\operatorname{Div}_{Z}(X')$, we obtain a descending filtration
\begin{equation}\label{ex3}
\cdots \hookrightarrow \|-D_{n+1}\|\hookrightarrow \|-D_{n}\|\hookrightarrow\cdots\hookrightarrow \|-D_{1}\|.
\end{equation}

\begin{lemma}\label{crucialcor2}
Let $X'$ be a compact complex manifold, $Z\subset X'$ a connected reduced analytic set of codimension one, $\{D_{n}\}_{n>0}$ a chain in $\operatorname{Div}_{Z}(X')$, and $X:=X'\setminus Z$. If in (\ref{ex3}) we have $\|-D_{n}\|=\|-D_{n-1}\|$ for all $n\in\mathbb{Z}_{>0}$, then $\|-D_{1}\|=\emptyset$.
\end{lemma}

\begin{proof}
For any $n\in\mathbb{Z}_{>0}$, we have $\|-D_{n}\|=\|-D_{1}\|$. Assume there exists an effective divisor $G$ near $Z$ with $G\sim -D_{1}$ (i.e., there exists a nonzero holomorphic function $f$ near $Z$ with $G=\operatorname{div}(f)-D_{1}\geq 0$). Then for any $n\in\mathbb{Z}_{>0}$, there exists an effective divisor $G_{n}$ near $Z$ with $G_{n}\sim -D_{n}$ and $G_{n}+D_{n}-D_{1}=G$. Hence $G_{n}+D_{n}=G+D_{1}=\operatorname{div}(f)$. Thus $\operatorname{div}(f)-D_{n}\geq 0$ for all $n>0$, which is impossible.
\end{proof}

The following corollary follows from the exact sequence (\ref{ex2}) and Lemma \ref{crucialcor2}.

\begin{corollary}\label{crucialcor2'}
Let $X'$ be a compact complex manifold, $Z\subset X'$ a connected reduced analytic set of codimension one, $\{D_{n}\}_{n>0}$ a chain in $\operatorname{Div}_{Z}(X')$, and $X:=X'\setminus Z$. If for all $n\in\mathbb{Z}_{>0}$ we have $\Gamma(Z,j^{-1}\mathcal{O}_{D_{n}-D_{n-1}}(-D_{n-1}))=0$, then $\|-D_1\|=\emptyset$ and $X$ is Hartogs.
\end{corollary}

From now until the end of this section, assume that $Z$ is a connected reduced analytic set of codimension one in a complex projective manifold $X'$ with all irreducible components nonsingular, and that $Div_Z(X')$ contains a nef divisor $D$.

\begin{lemma}\label{lemm24}
If $\mathcal{O}_{Z}(D)\cong \mathcal{O}_{Z}$ and $\|-D\|\neq\emptyset$, then any element of $\|-D\|$ is represented by an effective divisor $R$ in a small neighborhood $U$ of $Z$ such that the support of $R$ is contained in $Z$.
\end{lemma}

\begin{proof}
Consider an effective divisor $G=\operatorname{div}(f)-D$ near $Z$ for some nonzero $f\in \Gamma(Z, j^{-1}\mathcal{O}_{X'}(-D))$. Let $G=R+T$, where $R$ and $T$ are effective divisors such that $\operatorname{Supp}(R)\subset Z$ and no irreducible component $Z_{i}$ of $Z$ is a component of $T$. Then $T=\operatorname{div}(f)-D-R$. For any component $Z_i$ of $Z$, we have $\mathcal{O}_{Z_i}(-R)\cong \mathcal{O}_{Z_{i}}(T)$, which is nef and effective because $Z_{i}$ is not a component of $T$.

Let $D=\sum\alpha_i Z_{i}$ with $\alpha_{i}>0$, and $R=\sum\beta_i Z_{i}$ with $\beta_{i}\geq 0$. Since $\mathcal{O}_{Z_i}(D)$ is trivial, we have
\begin{equation}\label{eq1}
\mathcal{O}_{Z_i}(-\alpha_i Z_{i})\cong \mathcal{O}_{Z_{i}}\left(\sum\limits_{j\neq i}\alpha_{j}Z_{j}|_{Z_{i}}\right).
\end{equation}

Since $\mathcal{O}_{Z_i}(-R)\cong \mathcal{O}_{Z_{i}}(T)$, we have
\begin{equation}\label{eq2}
\mathcal{O}_{Z_i}(-\beta_i Z_{i})\otimes\mathcal{O}_{Z_{i}}\left(-\sum\limits_{j\neq i}\beta_{j}Z_{j}|_{Z_{i}}\right)\cong \mathcal{O}_{Z_{i}}(T|_{Z_{i}}).
\end{equation}

If $\beta_{i}=0$, then $(-T)|_{Z_{i}}$ is effective. Since $T|_{Z_i}$ is nef, it follows that $T|_{Z_{i}}=0$. In particular, $\beta_{j}=0$ for all $j\neq i$ such that $Z_{j}\cap Z_{i}\neq \emptyset$. Since $Z$ is connected, we have $\beta_{j}=0$ for all $j$. In this case $R=0$ and $T=0$ in a small neighborhood of $Z$, so $D=\operatorname{div}(f)$ (i.e., $\|-D\|$ contains the zero divisor).

Assume $\beta_{i}\neq 0$ for all $i$. Then from (\ref{eq1}) and (\ref{eq2}), for any $i$ we obtain
$$\mathcal{O}_{Z_{i}}\left(\sum\limits_{j\neq i}\beta_{i}\alpha_{j}Z_{j}|_{Z_{i}}\right)\otimes \mathcal{O}_{Z_{i}}\left(-\sum\limits_{j\neq i}\alpha_{i}\beta_{j}Z_{j}|_{Z_{i}}\right)\cong \mathcal{O}_{Z_{i}}(\alpha_{i} T|_{Z_{i}}).$$

Thus,
$$\mathcal{O}_{Z_{i}}\left(\sum\limits_{j\neq i}(\beta_{i}\alpha_{j}-\alpha_{i}\beta_{j})Z_{j}|_{Z_{i}}\right)\cong \mathcal{O}_{Z_{i}}(\alpha_{i} T|_{Z_{i}}).$$
In particular, $\beta_{i}\alpha_{j}-\alpha_{i}\beta_{j}\geq 0$ for any $j\neq i$ with $Z_{i}\cap Z_{j}\neq \emptyset$. Replacing $i$ with $j$, we obtain $\beta_{i}\alpha_{j}-\alpha_{i}\beta_{j}=0$. It follows that $T|_{Z_{i}}=0$. Hence $T=0$ in a small neighborhood of $Z$, and $R=\operatorname{div}(f)-D$ is an effective divisor with support $Z$.
\end{proof}

\begin{lemma}\label{lemm24'}
If $\mathcal{O}_{Z}(D)$ is torsion of finite order $r$ and $\|-rD\|\neq\emptyset$, then any element of $\|-rD\|$ is represented by an effective divisor $R$ in a small neighborhood $U$ of $Z$ such that the support of $R$ is contained in $Z$.
\end{lemma}

\begin{proof}
Apply Lemma \ref{lemm24} to the divisor $rD$.
\end{proof}

\begin{lemma}\label{Russo}
A divisor $D$ on a complex projective manifold $X'$ is semiample if and only if it is nef, abundant, and its graded algebra is finitely generated over $\mathbb{C}$. Moreover, the graded algebra of a divisor $D$ with $\kappa(D)\leq 1$ is finitely generated over $\mathbb{C}$.
\end{lemma}
\begin{proof}
By the GAGA principle and \cite[Corollary 1]{Russo1}.
\end{proof}

\begin{lemma}\label{corol8}
If $\mathcal{O}_{Z}(D)$ is torsion of finite order $r$ and $\|-rD\|\neq\emptyset$, then $\kappa(D)=1$. Moreover, $D$ is semiample of Iitaka dimension $\kappa(D)=1$.
\end{lemma}

\begin{proof}
Note that, $\nu(D)=1$. Indeed, $\mathcal{O}_{Z_{i}}(rD)\cong\mathcal{O}_{Z_{i}}$ for any irreducible component $Z_i$ of $Z$. By the projection formula, $c_{1}(\mathcal{O}_{X'}(D))\cdot c_{1}(\mathcal{O}_{X'}(Z_{i}))=0$ in $\mathbb{R}$-valued cohomology of $X'$, since $c_{1}(\mathcal{O}_{Z_{i}}(D))=0$ in $\mathbb{R}$-valued cohomology of $Z_i$. In particular, $(c_{1}(\mathcal{O}_{X'}(D)))^{2}=0$ in $\mathbb{R}$-valued cohomology of $X'$.

By Lemma \ref{lemm24'}, there exists an effective divisor $R$ supporting on $Z$ such that $\mathcal{O}_{U}(R+rD)\cong \mathcal{O}_{U}$. In particular, $\mathcal{O}_{2(R+rD)}(R+rD)\cong \mathcal{O}_{2(R+rD)}$ via the canonical morphism $(Z,\mathcal{O}_{2(R+rD)})\to (U,\mathcal{O}_{U})$. By Proposition \ref{uedabase4}, $R+rD$ is a fiber of a proper fibration on $X'$ over a curve. Hence $\kappa(R+rD)=1$. Since $D\leq R+rD\leq lD$ for some $l\gg 0$, we have $\kappa(D)=1$. In particular, $D$ is abundant with Iitaka dimension $\kappa(D)=1$. By Lemma \ref{Russo}, $D$ is semiample of Iitaka dimension $\kappa(D)=1$.
\end{proof}

Lemma \ref{corol8} implies the following corollary on the Hartogs property.

\begin{corollary}\label{crucialcor5}
If $\mathcal{O}_{Z}(D)$ is torsion of finite order $r$, $\kappa(D)=0$, and $\nu(D)=1$, then $\|-rD\|=\emptyset$. In particular, $X$ is Hartogs.
\end{corollary}

For the case where $\mathcal{O}_{Z}(D)$ is non-torsion with $\nu(D)=1$, we have the following lemma.

\begin{lemma}\label{crucialcor5'}
If $\mathcal{O}_{Z}(D)$ is non-torsion with $\nu(D)=1$, then $\|-lD\|=\emptyset$ for some $l\gg 0$. In particular, $X$ is Hartogs.
\end{lemma}

\begin{proof}
Let $A$ be a very ample divisor on $X'$. Since $\nu(D)=1$ and $D$ is nef and effective, we have $D\cdot Z_{i}\cdot A^{n-2}=0$. By \cite[Proposition 6.3]{Nak}, $c_{1}(\mathcal{O}_{X'}(D))\cdot c_{1}(\mathcal{O}_{X'}(Z_{i}))=0$ in the $\mathbb{R}$-valued cohomology of $X'$. By the projection formula, $c_{1}(\mathcal{O}_{Z_{i}}(D))=0$ in the $\mathbb{R}$-valued cohomology of $Z_i$. In particular, for any $i$, we have $\mathcal{O}_{Z_{i}}(lD)\in \operatorname{Pic}^{0}(Z_{i})$ for some $l\gg 0$.

Consider a nontrivial $\mathcal{O}_{Z}$-module homomorphism $\phi\colon \mathcal{O}_{Z}\to \mathcal{O}_{Z}(-mlD)$. Locally, let $\{U_{i}\}$ be an open covering of $Z$ such that $(\mathcal{O}_{Z}(-mlD))|_{U_i}\cong (\mathcal{O}_{Z})|_{U_i}$. Then $\phi|_{U_{i}}$ is multiplication by some $f_{i}\in \Gamma(U_{i},\mathcal{O}_{Z})\setminus \{0\}$. If $\{(g_{ij}, U_{i}\cap U_{j})\}$ with $g_{ij}\in\Gamma (U_{i}\cap U_{j}, \mathcal{O}_{Z}^{*})$ is a cocycle for $\mathcal{O}_{Z}(-mlD)$, then $f_{i}=g_{ij}f_{j}$.

Restricting $\phi$ to each irreducible component $Z_{i}$ of $Z$, we obtain a nontrivial homomorphism $\phi_{i}=\phi|_{Z_i}\colon \mathcal{O}_{Z_{i}}\to \mathcal{O}_{Z_{i}}(-mlD)$, since locally $\phi$ is given by a nonzero element. Since numerically trivial effective divisors on projective manifolds are trivial, we have $\mathcal{O}_{Z_{i}}(-mlD)\cong\mathcal{O}_{Z_{i}}$ for any $i$. Hence $\phi_{i}$ is multiplication by a constant $\alpha_{i}\in\mathbb{C}\setminus\{0\}$. Since $Z$ is connected, $\alpha_{i}=\alpha_{j}$ for all $i,j$. Thus, locally, $\phi$ is multiplication by an invertible element of $\mathcal{O}_{Z}$, so $\phi$ is an isomorphism. Therefore, $\mathcal{O}_{Z}(-D)$ (and hence $\mathcal{O}_{Z}(D)$) is torsion of finite order, a contradiction. Hence, $\Gamma(Z,\mathcal{O}_{Z}(-mlD))=0$ for all $m\in\mathbb{Z}_{>0}$. In particular, for any effective divisor $D'\in \operatorname{Div}_{Z}(X')$, the spaces $\Gamma(Z,\mathcal{O}_{D'}(-mlD))$ contain only nilpotent elements.

Set $D'=lD$. Let $s\in\Gamma(Z,\mathcal{O}_{lD}(-mlD))$ be a nontrivial section, and let $D_{s}$ be an effective divisor such that $Z\leq D_{s}\leq lD$, the section $s$ extends to a non-nilpotent section $s'\in \Gamma(Z, \mathcal{O}_{lD}(-(mlD+D_{s})))$, and $D_{s}$ is maximal with these properties (i.e., $D_{s}$ is the vanishing subscheme of $s$). Note that $\mathcal{O}_{Z_{i}}(-(mlD+D_{s}))\cong \mathcal{O}_{Z_{i}}(E_i)$, where $E_i=\operatorname{div}(s'|_{Z_i})$ is an effective divisor on $Z_{i}$ for any $i$. In particular, we have $D_{s}\cdot A_{i}^{n-2}\leq 0$, where $A_{i}$ is a very ample divisor on $Z_{i}$.

Let $D=\sum \alpha_{i}Z_{i}$ with $\alpha_{i}>0$, and $D_{s}=\sum \mu_{i}Z_{i}$ with $1\leq \mu_{i}\leq l\alpha_{i}$. Then for any $i$, we have
\begin{equation}\label{eq3}
\mu_{i} Z_{i}\cdot A_{i}^{n-2}\leq -\sum\limits_{j\neq i} \mu_{j}Z_{j}\cdot A_{i}^{n-2}
\end{equation}
and
\begin{equation}\label{eq4}
\alpha_{i}Z_{i}\cdot A_{i}^{n-2} \geq -\sum\limits_{j\neq i}\alpha_{j}Z_{j}\cdot A_{i}^{n-2}.
\end{equation}

From \eqref{eq3} and \eqref{eq4}, we obtain
\[
\sum\limits_{j\neq i}\mu_{i}\alpha_{j}Z_{j}\cdot A_{i}^{n-2}\geq \sum\limits_{j\neq i}\alpha_{i}\mu_{j}Z_{j}\cdot A_{i}^{n-2}.
\]

Hence, for any $j\neq i$ with $Z_{j}\cap Z_{i}\neq \emptyset$, we have $\mu_{i}\alpha_{j}\geq \alpha_{i}\mu_{j}$. Replacing $i$ with $j$, we obtain $\mu_{i}\alpha_{j}=\alpha_{i}\mu_{j}$ for any $j\neq i$ with $Z_{i}\cap Z_{j}\neq \emptyset$. 

Since $Z$ is connected, $\mu_{i}\alpha_{j}=\alpha_{i}\mu_{j}$ holds for all $j\neq i$. Indeed, for $i\neq j\neq k$ with $Z_{i}\cap Z_{j}\neq\emptyset$ and $Z_{j}\cap Z_{k}\neq\emptyset$, we have $\mu_{i}\alpha_{j}=\alpha_{i}\mu_{j}$ and $\mu_{j}\alpha_{k}=\alpha_{j}\mu_{k}$, implying $\mu_{i}\alpha_{k}=\alpha_{k}\mu_{i}$. In particular, $\alpha_{1}D_{s}=\mu_{1}D$.

Then for $l'\gg 0$, we have for any $i$,
\[
\mathcal{O}_{Z_i}(l'\alpha_{1}E_{i})\cong \mathcal{O}_{Z_i}(l'(\alpha_{1} mlD+\alpha_{1} D_{s}))\cong \mathcal{O}_{Z_i}(l'(\alpha_{1}ml+\mu_{1}) D)\in \operatorname{Pic}^{0}(Z_i).
\]
This means that $E_{i}$ is numerically trivial. Since $E_i$ is also effective, it follows that $E_{i}=0$.

Now, we obtain that $\mathcal{O}_{Z_{i}}(mlD+D_{s})\cong \mathcal{O}_{Z_{i}}$ for any $i$, and hence $\mathcal{O}_{Z}(mlD+D_{s})\cong \mathcal{O}_{Z}$, since $Z$ is connected and $\Gamma(Z_{i},\mathcal{O}_{Z_i})=\mathbb{C}$. In particular,
\[
\mathcal{O}_{Z}\cong \mathcal{O}_{Z}(\alpha_{1} mlD+\alpha_{1} D_{s})\cong \mathcal{O}_{Z}((\alpha_{1}ml+\mu_{1}) D).
\]

This means that $\mathcal{O}_{Z}(D)$ is torsion of finite order, a contradiction. Hence, $\Gamma(Z,\mathcal{O}_{lD}(-mlD))=0$. Taking $D_{m}=mlD$, by Corollary \ref{crucialcor2'} we obtain $\|-lD\|=\emptyset$ and $X$ is Hartogs.
\end{proof}

\subsection{Proper maps with connected fibers}\label{prophart}

Let $f\colon X'\to Y'$ be a surjective proper holomorphic map between compact complex manifolds, $V\subset Y'$ a connected analytic subset, $Z=f^{-1}(V)$, $X:=X'\setminus Z$, and $Y:=Y'\setminus V$. The induced holomorphic map $f\colon X\to Y$ is surjective and proper. Assume that $f_{*}\mathcal{O}_{X}=\mathcal{O}_{Y}$.

\begin{lemma}\label{proplemma}
Let $X$ and $Y$ be as above. Then $X$ is Hartogs if and only if $Y$ is Hartogs.
\end{lemma}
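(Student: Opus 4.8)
The plan is to reduce everything to the colimit characterization of the Hartogs property recorded in the proof of Lemma \ref{mainlemma}: for a connected non-compact complex manifold $M$ arising as the complement of a connected analytic set, $M$ is Hartogs if and only if the canonical restriction homomorphism $r_M\colon\Gamma(M,\mathcal{O}_M)\to\varinjlim_{K}\Gamma(M\setminus K,\mathcal{O}_M)$, with the colimit taken over all compact $K\subset M$, is an isomorphism. First I would check that this characterization is available on both sides. Since $f$ is surjective and $V\subsetneq Y'$ is a proper analytic subset, $Y=Y'\setminus V$ is connected (the complement of a proper analytic subset of a connected manifold is connected) and non-compact (otherwise $X=f^{-1}(Y)$ would be compact by properness, contradicting $X\neq X'$); the same remarks show $X$ is connected and non-compact. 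Moreover $f_{*}\mathcal{O}_X=\mathcal{O}_Y$ forces the fibers of $f$ to be connected, whence $Z=f^{-1}(V)$ is connected. Thus $X$ is Hartogs $\iff r_X$ is an isomorphism, and $Y$ is Hartogs $\iff r_Y$ is an isomorphism, so it suffices to identify $r_X$ with $r_Y$.

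Second, I would exploit $f_{*}\mathcal{O}_X=\mathcal{O}_Y$ together with the definition of the direct image. For every open $W\subseteq Y$ one has the natural identification $\Gamma(f^{-1}(W),\mathcal{O}_X)=\Gamma(W,f_{*}\mathcal{O}_X)=\Gamma(W,\mathcal{O}_Y)$, compatible with restriction to smaller opens. Taking $W=Y$ gives $\Gamma(X,\mathcal{O}_X)\cong\Gamma(Y,\mathcal{O}_Y)$, and taking $W=Y\setminus L$ for a compact $L\subset Y$ gives $\Gamma(X\setminus f^{-1}(L),\mathcal{O}_X)\cong\Gamma(Y\setminus L,\mathcal{O}_Y)$, since $X\setminus f^{-1}(L)=f^{-1}(Y\setminus L)$. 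These isomorphisms are natural in $L$ and intertwine the two families of restriction homomorphisms.

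Third, I would show that the family $\{f^{-1}(L): L\subset Y\text{ compact}\}$ is cofinal in the directed system of all compact subsets of $X$. Because $f$ is proper, each $f^{-1}(L)$ is compact; because $f$ is continuous, $f(K)$ is compact for every compact $K\subset X$; and $f^{-1}(f(K))\supseteq K$. The subfamily is directed since $f^{-1}(L_1)\cup f^{-1}(L_2)=f^{-1}(L_1\cup L_2)$. Hence the colimit may be computed along this subfamily, and combining with the previous step,
\[
\varinjlim_{K\subset X}\Gamma(X\setminus K,\mathcal{O}_X)\;\cong\;\varinjlim_{L\subset Y}\Gamma\bigl(X\setminus f^{-1}(L),\mathcal{O}_X\bigr)\;\cong\;\varinjlim_{L\subset Y}\Gamma(Y\setminus L,\mathcal{O}_Y).
\]
Under the identification $\Gamma(X,\mathcal{O}_X)\cong\Gamma(Y,\mathcal{O}_Y)$, these isomorphisms carry $r_X$ to $r_Y$, so $r_X$ is an isomorphism precisely when $r_Y$ is, which finishes the argument.

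I expect no hard analysis; the only genuine content is packaging the naturality and cofinality correctly. The crux is that properness of $f$ makes the saturated compacts $f^{-1}(L)$ cofinal among all compacts of $X$, while $f_{*}\mathcal{O}_X=\mathcal{O}_Y$ identifies sections over the relevant complements on the two sides. The one point requiring care, and the step I would verify explicitly rather than assert, is the compatibility of the pushforward identification with the restriction maps defining $r_X$ and $r_Y$, i.e.\ that the comparison square commutes; this follows from functoriality of the direct image functor.
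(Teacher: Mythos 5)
Your proposal is correct and rests on the same two ingredients as the paper's proof --- properness of $f$, which makes the saturated compacts $f^{-1}(L)$ cofinal among all compacts of $X$ (equivalently, every compact of $Y$ is of the form $f(K)$), and the hypothesis $f_{*}\mathcal{O}_{X}=\mathcal{O}_{Y}$, which identifies sections over saturated opens --- so it is essentially the paper's argument. The only difference is packaging: the paper fixes a compact $K\subset X$ with connected complement and chases the diagram $\Gamma(X,\mathcal{O}_{X})\to\Gamma(X\setminus K,\mathcal{O}_{X})\to\Gamma(X\setminus f^{-1}(f(K)),\mathcal{O}_{X})\cong\Gamma(Y\setminus f(K),\mathcal{O}_{Y})$, checking along the way that $Y\setminus f(K)$ and $X\setminus f^{-1}(f(K))$ are connected, whereas you pass to the colimit characterization of the Hartogs property recorded in the proof of Lemma~\ref{mainlemma} and argue by cofinality, which spares you the connectedness bookkeeping at the cost of leaning on that (slightly delicate) equivalence for both $X$ and $Y$ --- a reliance that is legitimate here since both are complements of connected analytic sets in compact manifolds, as you verify.
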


\begin{proof}
For any compact set $K\subset X$ with connected complement $X\setminus K$, the sets $Y\setminus f(K)$ and $X\setminus f^{-1}(f(K))$ are connected. Moreover, any compact subset of $Y$ is of the form $f(K)$ for some compact $K\subset X$. The statement follows from the commutative diagram of restriction homomorphisms:
\[
\begin{diagram}
\node{\Gamma(X, \mathcal{O}_{X})} \arrow{e,t}{} \arrow{se,t}{} \arrow[2]{s,r}{\cong}
\node{\Gamma(X\setminus K, \mathcal{O}_{X})} \arrow{s,r}{}
\\
\node[2]{\Gamma(X\setminus f^{-1}(f(K)), \mathcal{O}_{X})} \arrow{s,r}{\cong}
\\
\node{\Gamma(Y, \mathcal{O}_{Y})} \arrow{e,t}{}
\node{\Gamma(Y\setminus f(K), \mathcal{O}_{Y})}
\end{diagram}
\]
\end{proof}

\section{Proof of Main Theorem}
Let $X'$ be a projective manifold, $D$ a nef effective divisor with connected support $Z$, and $X:=X'\setminus Z$. 

\subsection{Case $\kappa(D)=\nu(D)=1$}\label{case11}
We follow arguments as in the proof of \cite[Proposition 2.1]{Kaw1}. Let $\phi_{|mD|}\colon X' \dashrightarrow Y$ be the rational map corresponding to the linear system $|mD|$ for some $m\gg 0$, where $\dim Y=1$. By resolution of singularities and Stein factorization, we obtain the commutative diagram:
\[
\xymatrix{
X' \ar@{.>}[d]_{\phi_{|mD|}} & W \ar[l]^{g} \ar[d]^{f} \\
Y & Y' \ar[l]^{h}
}
\]
Here $g$ is a birational morphism, $W$ a nonsingular projective manifold, $Y'$ a smooth curve, $h$ a finite surjective holomorphic map, and $f$ a surjective holomorphic map with connected fibers satisfying $f_{*}\mathcal{O}_{W}=\mathcal{O}_{Y'}$. Moreover, there exists an effective divisor $F=\sum n_{i}F_{i}$ on $W$ such that $L:=g^{*}(mD)-F$ is base-point free and the linear system $|L|$ induces $f$. Thus $L=f^{*}L_{0}$, where $L_{0}$ is a nef and big divisor on the curve $Y'$ (hence ample).

Note that no irreducible component of $F$ is mapped onto $Y'$ by $f$. Indeed, let $A$ be an ample divisor on $W$, and suppose an irreducible component $F_{1}$ of $F$ satisfies $f(F_1)=Y'$. Then $F_{1}\cdot L\cdot A^{n-2}=F_{1}\cdot f^{*}L_{0}\cdot A^{n-2}>0$, so $F\cdot L\cdot A^{n-2}>0$. However,
\[
0=(L+F)^{2}\cdot A^{n-2}=L^{2}\cdot A^{n-2}+L\cdot F\cdot A^{n-2}+g^{*}(mD)\cdot F\cdot A^{n-2},
\]
which is impossible since $L$ and $g^{*}(mD)$ are nef and $F\cdot L\cdot A^{n-2}>0$.

Thus, there exists an ample divisor $Z_0$ on $Y'$ supported at a point $p\in Y'$ such that $g^{*}(mD)=f^{*}(Z_{0})$. We obtain
\[
\xymatrixcolsep{5pc}\xymatrix{
X & W\setminus g^{-1}(Z) \ar[l]_{g}^{\cong} \ar[r]^{f}_{\text{proper, onto}} & Y'\setminus p
}
\]
which implies that $X$ is not Hartogs by the results of Section \ref{prophart}.

\begin{remark}
Alternatively, one may apply Lemma \ref{Russo} instead of the above arguments. Since $D$ is semiample of Iitaka dimension one, $mD$ is base-point free for some $m>0$. Then by Section \ref{prophart}, $X$ is not Hartogs.
\end{remark}

\begin{remark}
If $Z$ is a nonsingular irreducible divisor with $\kappa(Z)=\nu(Z)=1$, then the Ueda type $\operatorname{utype}(Z)=\infty$. Indeed, by Lemma \ref{Russo}, the linear system $|mZ|$ is base-point free. After Stein factorization, we may assume $\phi_{|mZ|}\colon X'\to Y$ has connected fibers. By Proposition \ref{uedabase2}, $\operatorname{utype}(Z)=r\cdot \operatorname{utype}(p)$, where $p\in Y$ is a point with $\phi_{|mZ|}^{*}(p)=mZ$. Since $\operatorname{utype}(p)=\infty$, it follows that $\operatorname{utype}(Z)=\infty$.
\end{remark}

\subsection{Case $\nu(D)\geq 2$}\label{case2}
The Kawamata--Viehweg vanishing theorem (see, e.g., \cite[Section 6.D]{Dem1} or \cite{Dem2}) implies that for any $m\in \mathbb{Z}_{>0}$, $\Gamma(X',\mathcal{O}_{D}(-mD))=0$. By Corollary \ref{crucialcor2'}, it follows that $X$ is Hartogs.

\begin{remark}
In the projective case, one may also use a hyperplane section argument with induction on $\dim X'$ and the classical Kawamata-Viehweg vanishing theorem for nef and big divisors.
\end{remark}

\subsection{Case $\kappa(D)=0$, $\nu(D)=1$}\label{case01}
By log resolution of $(X',Z)$, we may assume $D$ is an effective nef divisor and each irreducible component of $Z$ is nonsingular. The Iitaka and numerical dimensions remain unchanged under resolution (for Iitaka dimension, see \cite[Chapter II, Section 3b, Remark 3.6]{Nak}; for numerical dimension, this follows from the functoriality of the cup product in $H^{*}(X',\mathbb{R})$).

The sheaf $\mathcal{O}_{Z}(D)$ can be either non-torsion or torsion of finite order, and in both cases $X$ is Hartogs by Lemma \ref{crucialcor5'} and Corollary \ref{crucialcor5}.

\begin{remark}\label{uedarem}
If $Z$ is a nonsingular irreducible divisor with $\kappa(Z)=0$ and $\nu(Z)=1$, then $mZ$ cannot be a fiber of a fibration for any $m\in\mathbb{Z}_{>0}$. Since $\nu(Z)=1$, we have $(N_{Z,X'})^{\otimes m_0}\in \operatorname{Pic}^{0}(Z)$.
\begin{enumerate}
\item If $N_{Z,X'}$ is torsion, then $\operatorname{utype}(Z)<\infty$ by Proposition \ref{uedabase3};
\item If $N_{Z,X'}$ is non-torsion, then $\operatorname{utype}(Z)$ may be finite or infinite (see Proposition \ref{ruledprop}).
\end{enumerate}
\end{remark}

\section{Examples}\label{examples}

In this section, we consider ruled surfaces $X'$ over a complex curve $C$ and a section $Z$ with $Z^{2}=0$. For ruled surfaces, the reference \cite[Section 7, p. 51]{Neeman} is particularly useful. For the ruled surface $\pi\colon X'\to C$, there exists an extension:
\begin{equation}\label{ext}
0\to\mathcal{O}_{C}\to\mathcal{E}\to \mathcal{L}\to 0
\end{equation}
such that $\pi_{*}\mathcal{O}_{X'}(Z)\cong \mathcal{E}$ and $X'\cong \mathbb{P}_{C}(\mathcal{E})$ over $C$ (here $\mathbb{P}_{C}(-)$ denotes the projectivization of hyperplanes, not lines). Since $Z^{2}=0$, we have $\deg(\mathcal{L})=0$. Moreover, there exists a (non-canonical) isomorphism $N_{Z,X'}=\mathcal{O}_{Z}(Z)\cong \mathcal{L}$.

\paragraph{Case of non-splitting of (\ref{ext})}
In this case, $\operatorname{utype}(Z)=1$ and $X'\setminus Z$ is a modification of a Stein variety (see \cite[Proposition 7.1, p. 51]{Neeman} and Proposition \ref{uedabase1}). In particular, $X'\setminus Z$ is Hartogs and $\kappa(Z)=0$.

\paragraph{Case of splitting of (\ref{ext})}
Then $X'\cong \mathbb{P}_{C}(\mathcal{O}_{C}\oplus\mathcal{L})$ over $C$ and $\operatorname{utype}(Z)=\infty$ (see \cite[Lemma 7.2, p. 51]{Neeman}, \cite[Hypothesis 5.2, p. 34]{Neeman}, and \cite[Definition 6.1, p. 9]{Neeman}). Moreover, $X'\setminus Z\cong \operatorname{Tot}(\mathcal{L}^{-1})$, and any small neighborhood $U$ of $Z$ is biholomorphic to a neighborhood of the zero-section of $\operatorname{Tot}(\mathcal{L})\to C$.

If $\mathcal{L}\in \operatorname{Pic}^{0}(C)$ is torsion of order $r$, then there exists an $r$-fold covering map $U\to C\times\Delta$, $(p,v)\mapsto (p,v^{\otimes r})$, where $\Delta=\{z\in\mathbb{C}\mid |z|<\varepsilon\}\subset \mathbb{C}$. Hence, $\Gamma(U,\mathcal{O}_{X'})\neq \mathbb{C}$. In particular, $X'\setminus Z$ is not Hartogs and $\kappa(Z)=1$.

If $\mathcal{L}\in \operatorname{Pic}^{0}(C)$ is non-torsion, then $\Gamma(Z,\mathcal{O}_{Z}(-mZ))=0$ for all $m\in\mathbb{Z}_{>0}$. By Corollary \ref{crucialcor2'}, it follows that $X'\setminus Z$ is Hartogs and $\kappa(Z)=0$.

Summarizing, we obtain the following 
\begin{proposition}\label{ruledprop}
Let $\pi\colon X'=\mathbb{P}_{C}(\mathcal{E})\to C$ be a ruled surface corresponding to the exact sequence (\ref{ext}), and let $Z\subset X'$ be a section of $\pi$ with $\pi_{*}(\mathcal{O}_{X'}(Z))\cong\mathcal{E}$ and $Z^{2}=0$.
\begin{enumerate}
\item If (\ref{ext}) is non-split, then $\operatorname{utype}(Z)=1$, $\kappa(Z)=0$, and $X'\setminus Z$ is Hartogs;
\item If (\ref{ext}) is split, then $\operatorname{utype}(Z)=\infty$. Additionally:
\begin{enumerate}
\item If $\mathcal{L}$ is torsion, then $\kappa(Z)=1$ and $X'\setminus Z$ is not Hartogs;
\item If $\mathcal{L}$ is non-torsion, then $\kappa(Z)=0$ and $X'\setminus Z$ is Hartogs.
\end{enumerate}
\end{enumerate}
\end{proposition}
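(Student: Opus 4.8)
The plan is to treat this as an assembly of the Ueda-type dichotomy for ruled surfaces together with the cohomological Hartogs criteria established above. First I would record the common setup: the projection $\pi$ restricts to an isomorphism $Z\cong C$, so that $N_{Z,X'}=\mathcal{O}_{Z}(Z)\cong\mathcal{L}$ with $\deg\mathcal{L}=0$; since $Z$ is nef with $Z^{2}=0$ but $Z\cdot A>0$ for an ample $A$, we have $\nu(Z)=1$. The whole statement then reduces to computing, in each case, the Ueda type, the Iitaka dimension $k(Z)$, and whether $\|-Z\|=\emptyset$ (Corollary \ref{crucialcor}). The splitting behaviour of (\ref{ext}) controls the Ueda type through \cite[Proposition 7.1]{Neeman}, which I take as a black box: non-split gives $utype(Z)=1$, split gives $utype(Z)=\infty$.

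For the non-split case I would argue as follows. Since $utype(Z)=1<\infty$, Proposition \ref{uedabase1} shows that $X'\setminus Z$ is holomorphically convex with $\mathbb{C}[X'\setminus Z]=\mathbb{C}$ and is a proper modification of a two-dimensional Stein space, hence Hartogs. To pin down $k(Z)=0$, I would argue by contradiction: if $k(Z)=1$, then for some $m$ a multiple $mZ$ (or a part of it) is a fiber of a fibration, which trivializes $\mathcal{O}(mZ)$ on every infinitesimal neighbourhood and forces $utype(Z)=\infty$ by Propositions \ref{uedabase3} and \ref{uedabase4}, contradicting finiteness. Alternatively, when $N_{Z,X'}$ is torsion one may bypass Proposition \ref{uedabase1} and conclude Hartogs directly from Corollary \ref{crucialcor5} (using $k(Z)=0$, $\nu(Z)=1$), and when $\mathcal{L}$ is non-torsion from the vanishing $\Gamma(C,\mathcal{L}^{-m})=0$ via Corollary \ref{crucialcor2'}.

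For the split case I would use the explicit description $X'\cong\mathbb{P}_{C}(\mathcal{O}_{C}\oplus\mathcal{L})$ and $X'\setminus Z\cong\mathrm{Tot}(\mathcal{L}^{-1})$, with a neighbourhood $U$ of $Z$ biholomorphic to a neighbourhood of the zero-section of $\mathrm{Tot}(\mathcal{L})$. If $\mathcal{L}$ is torsion of order $r$, the map $(p,v)\mapsto(p,v^{\otimes r})$ exhibits an $r$-fold cover $U\to C\times\Delta$, producing non-constant functions near $Z$; equivalently $\Gamma(Z,j^{-1}\mathcal{O}(-rZ))\neq0$, so $\|-rZ\|\neq\emptyset$ and $X'\setminus Z$ is not Hartogs by Corollary \ref{crucialcor}, while $rZ$ is a genuine fiber by Proposition \ref{uedabase4}, giving $k(Z)=1$. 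If $\mathcal{L}$ is non-torsion, then $\Gamma(Z,j^{-1}\mathcal{O}(-mZ))=\Gamma(C,\mathcal{L}^{-m})=0$ for every $m>0$, so Corollary \ref{crucialcor2'} yields Hartogs; and since $X'\setminus Z$ is Hartogs with $\nu(Z)=1$, the Main Theorem forbids $Z$ from being abundant of Iitaka dimension one, forcing $k(Z)=0$.

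The hard part is the non-split case with torsion normal bundle. There the naive vanishing $\Gamma(X',\mathcal{O}_{Z}(-mZ))=0$ fails whenever $m$ is divisible by the order of $\mathcal{L}$ (since then $\mathcal{L}^{-m}\cong\mathcal{O}_{C}$ has a section), so Corollary \ref{crucialcor2'} is unavailable and one is forced into the finer analysis underlying Corollary \ref{crucialcor5}. The crux is establishing $k(Z)=0$, that is, ruling out that $Z$ sits inside a fiber, and the whole argument hinges on the equivalence between finiteness of the Ueda type and the non-existence of a fibration through a multiple of $Z$. Once $k(Z)=0$ is secured, Corollary \ref{crucialcor5} closes the case; verifying that the torsion section $Z$ of a non-split ruled surface does not move in a pencil is the one place where the geometry of $\mathbb{P}_{C}(\mathcal{E})$, rather than a citation, has to be used.
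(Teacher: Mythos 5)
Your proposal is correct and follows essentially the same route as the paper: Neeman's Proposition 7.1 for the Ueda-type dichotomy, Proposition \ref{uedabase1} (finite Ueda type $\Rightarrow$ modification of a Stein surface, hence Hartogs, and $k(Z)=0$) in the non-split case, and the explicit identification $X'\setminus Z\cong \mathrm{Tot}(\mathcal{L}^{-1})$ with the $r$-fold covering (torsion) or the vanishing $\Gamma(C,\mathcal{L}^{-m})=0$ via Corollary \ref{crucialcor2'} (non-torsion) in the split case. Your closing paragraph overstates the difficulty of the non-split torsion subcase --- the Ueda-type argument you already gave disposes of it without invoking Corollary \ref{crucialcor5} --- but this does not affect correctness.
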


\section{Conclusion}\label{conclusion}

In this section, we discuss Question from the introduction for the case of effective but not necessarily nef divisors. This case is not fully understood; we only examine some specific instances here.

\paragraph{Nonsingular projective surfaces.}

Let $X'$ be a nonsingular projective surface, $D$ an effective divisor with connected support $Z$, and $X:=X'\setminus Z$. There exists a Zariski decomposition $mD = P + N$ for some $m \in \mathbb{Z}_{>0}$ (see, e.g., \cite{Nak}) such that:

\begin{itemize}
\item $P$ and $N$ are effective divisors;
\item $N$ is exceptional (in the category of normal surfaces);
\item $P$ is nef;
\item $P \cdot N = 0$.
\end{itemize}

\begin{lemma}\label{Zardecom}
Let $X'$, $D$, $P$, and $N$ be as above. Then:
\begin{enumerate}
\item If $\|-mD\| \neq \emptyset$, then $\|-P\| \neq \emptyset$.
\item If $\|-P\| \neq \emptyset$, then there exists an effective divisor $R$ with $\operatorname{Supp}(R) = Z$ such that $\|-R\| \neq \emptyset$.
\end{enumerate}
\end{lemma}

\begin{proof}
Let $f\colon X' \to Y'$ be the contraction of $\operatorname{Supp}(N)$, where $Y'$ is a normal compact surface.

(1) Let $D' \in \operatorname{Div}(U)$ be an effective divisor on a small neighborhood $U$ of $\operatorname{Supp}(D) = Z$ such that $D' + mD|_{U} \sim 0$. Then
\[
(f^{[*]}f_{*}D')|_{U} + P|_{U} = (f^{[*]}f_{*}D' + f^{[*]}f_{*}(mD|_{U}))|_{U} \sim 0,
\]
where $f_{*}$ is the pushforward and $f^{[*]}$ is the proper transform (see \cite[Chapter II, Section 2e]{Nak}). Hence $\|-P\| \neq \emptyset$.

(2) Let $D' \in \operatorname{Div}(V)$ be an effective divisor on a small neighborhood $V$ of $\operatorname{Supp}(P)$ such that $D' + P|_{V} \sim 0$. Then
\[
f_{*}(D' + mD|_{V}) = f_{*}(D' + P|_{V}) \sim 0,
\]
so $f^{*}f_{*}D' + f^{*}f_{*}(mD|_{V}) \sim 0$, where $f^{*}$ is the total transform. Define $R := f^{*}f_{*}(mD|_{V})$ on $f^{-1}(f(V))$. Hence $\|-R\| \neq \emptyset$.
\end{proof}

In particular, we obtain the following result.

\begin{proposition}
Let $X'$ be a nonsingular projective surface, $D$ an effective divisor with connected support $Z$, and $X := X'\setminus Z$. Let $mD = P + N$ be the Zariski decomposition of $mD$ for some $m$. Then $X$ is not Hartogs if and only if $P$ is abundant with Iitaka dimension $\kappa(P) = 1$.
\end{proposition}

\begin{proof}
By Lemma \ref{lemm22} and Lemma \ref{Zardecom}, $X$ is not Hartogs if and only if $X'\setminus \operatorname{Supp}(P)$ is not Hartogs. By the Main Theorem, this is equivalent to $P$ being abundant with Iitaka dimension $\kappa(P) = 1$.
\end{proof}

We conjecture that these arguments remain valid for effective divisors on projective manifolds that admit a Zariski decomposition in the above sense.

\paragraph{K\"ahler manifolds.}
Note that the case $\nu(D) \geq 2$ (Subsection \ref{case2}) also holds for arbitrary compact K\"ahler manifolds. We conjecture that a purely analytic proof of the Main Theorem exists, even for arbitrary compact K\"ahler manifolds.

Since effective divisors admit a divisorial Zariski decomposition in the sense of Boucksom \cite[Section 19.B, Theorem 19.12]{Dem1}, we believe this may help extend the results to the general case using purely analytic methods.

\backmatter

\bmhead{Acknowledgements} The study has been funded within the framework of the HSE University Basic Research Program.


\bibliography{sn-bibliography}

\end{document}